\def\draftdate{April 5, 2012}
\let\term\emph
\def\botimes{\mathop{\textstyle \bigotimes}\nolimits}
\newcommand{\DRM}{\aD_{R^{\op}}}
\newcommand{\LC}[1]{\oC_{#1}}
\newcommand{\AC}{\oA}
\newcommand{\SK}{\oK}
\newcommand{\ie}{\mathbf{1}}
\newcommand{\ze}{\mathbf{i}}
\newcommand{\cim}[1]{#1_{\sharp}}
\newcommand{\MA}{R_{M}}
\newcommand{\opn}[1]{\Lambda_{#1}}
\newcommand{\cubea}{\mathbf{a}}
\newcommand{\cubeb}{\mathbf{b}}
\newcommand{\cubec}{\mathbf{c}}
\newcommand{\op}{{\mathrm{op}}}
\def\quickop#1{\expandafter\DeclareMathOperator\csname #1\endcsname{#1}}
\newcommand{\bL}{{\mathbb{L}}}
\newcommand{\bR}{{\mathbb{R}}}
\DeclareMathAlphabet{\scr}{U}{rsfs}{m}{n}
\let\catsymbfont\scr  
\newcommand{\aC}{{\catsymbfont{C}}}
\newcommand{\aD}{{\catsymbfont{D}}}
\newcommand{\aM}{{\catsymbfont{M}}}
\newcommand{\aS}{{\catsymbfont{S}}}
\newcommand{\aT}{{\catsymbfont{T}}}
\let\opsymbfont\mathfrak 
\newcommand{\oA}{{\opsymbfont{A}}}
\newcommand{\oC}{{\opsymbfont{C}}}
\newcommand{\oK}{{\opsymbfont{K}}}
\newcommand{\oL}{{\opsymbfont{L}}}
\newcommand{\iso}{\cong}
\newcommand{\sma}{\wedge}
\renewcommand{\to}{\mathchoice{\longrightarrow}{\rightarrow}{\rightarrow}{\rightarrow}}
\newcommand{\from}{\mathchoice{\longleftarrow}{\leftarrow}{\leftarrow}{\leftarrow}}
\newcommand{\subseg}[2]{%
\overbrace{\hbox to #2{\hss%
\vbox to 8pt{\vss\hrule height2pt width2pt depth1pt}%
\vbox to 8pt{\vss\hrule height1pt width#2 depth0pt}%
\vbox to 8pt{\vss\hrule height2pt width2pt depth1pt}%
\vbox to 0pt{\vss\hrule height0pt width0pt depth8pt}%
\hss}}^{#1}}
\newcommand{\subsegnolabel}[1]{%
\hbox to #1{\hss%
\vbox to 8pt{\vss\hrule height2pt width2pt depth1pt}%
\vbox to 8pt{\vss\hrule height1pt width#1 depth0pt}%
\vbox to 8pt{\vss\hrule height2pt width2pt depth1pt}%
\vbox to 0pt{\vss\hrule height0pt width0pt depth8pt}%
\hss}}
\newcommand{\subsegnoend}[2]{%
\overbrace{\hbox to #2{\hss%
\vbox to 8pt{\vss\hrule height2pt width2pt depth1pt}%
\vbox to 8pt{\vss\hrule height1pt width#2 depth0pt}%
\vbox to 8pt{\vss\hrule height1pt width2pt depth0pt}%
\vbox to 0pt{\vss\hrule height0pt width0pt depth8pt}%
\hss}}^{#1}}
\newcommand{\subsegnoel}[1]{%
\hbox to #1{\hss%
\vbox to 8pt{\vss\hrule height2pt width2pt depth1pt}%
\vbox to 8pt{\vss\hrule height1pt width#1 depth0pt}%
\vbox to 8pt{\vss\hrule height1pt width2pt depth0pt}%
\vbox to 0pt{\vss\hrule height0pt width0pt depth8pt}%
\hss}}
\newcommand{\subsegnobeg}[2]{%
\overbrace{\hbox to #2{\hss%
\vbox to 8pt{\vss\hrule height1pt width2pt depth0pt}%
\vbox to 8pt{\vss\hrule height1pt width#2 depth0pt}%
\vbox to 8pt{\vss\hrule height2pt width2pt depth1pt}%
\vbox to 0pt{\vss\hrule height0pt width0pt depth8pt}%
\hss}}^{#1}}
\newcommand{\subsegnobl}[1]{%
\hbox to #1{\hss%
\vbox to 8pt{\vss\hrule height1pt width2pt depth0pt}%
\vbox to 8pt{\vss\hrule height1pt width#1 depth0pt}%
\vbox to 8pt{\vss\hrule height2pt width2pt depth1pt}%
\vbox to 0pt{\vss\hrule height0pt width0pt depth8pt}%
\hss}}
\newtheorem{thm}[equation]{Theorem}
\newtheorem*{main}{Main Theorem}
\newtheorem*{cmain}{Conjecture}
\newtheorem{lem}[equation]{Lemma}
\newtheorem{prop}[equation]{Proposition}
\newtheorem{prob}[equation]{Problem}
\theoremstyle{definition}
\newtheorem{defn}[equation]{Definition}
\newtheorem{notn}[equation]{Notation}
\newtheorem{conv}[equation]{Convention}
\newtheorem{cons}[equation]{Construction}
\numberwithin{equation}{section}
\begin{document}

\title[The Smash Product for Derived Categories]
{The smash product for derived categories in stable homotopy theory}

\author{Michael A. Mandell}
\address{Department of Mathematics, Indiana University,
Bloomington, IN 47405}
\email{mmandell@indiana.edu}
\thanks{The author was supported in part by NSF grants DMS-0504069 and
DMS-0804272.}
\subjclass[2000]{Primary 55P43; Secondary 18D10, 18D50}
\date{\draftdate}

\begin{abstract}
An $E_{1}$ (or $A_{\infty}$) ring spectrum $R$ has a derived category of
modules $\aD_{R}$.  An $E_{2}$ structure on $R$ endows $\aD_{R}$ with
a monoidal product $\sma_{R}$.  An $E_{3}$ structure on $R$ endows
$\sma_{R}$ with a braiding.  If the $E_{3}$ structure extends to an
$E_{4}$ structure then the braided monoidal product $\sma_{R}$ is
symmetric monoidal.
\end{abstract}

\maketitle

\section*{Introduction}

Stable homotopy theory is essentially the study of generalized
homology and cohomology theories.  From its beginning in the work of
Spanier and Whitehead on duality in the 1950's and the work of Adams,
Atiyah and Hirzebruch, Thom, Quillen, and many others on vector
fields, topological $K$-theory, and cobordism theory in the 1950's,
1960's, and 1970's, stable homotopy theory has provided powerful tools
for studying questions in geometry and topology.  Many of algebraic
topology's deepest advances and greatest successes have been tied to
the development of new cohomology theories and the study of stable
phenomena.

Because cohomology theories involve long exact sequences, very few
algebraic constructions work without severe flatness hypotheses.
Stable homotopy theorists therefore study a refinement (due to
Boardman) of the category of cohomology theories, called the ``stable
category'', whose objects are usually called ``spectra''.  This
category has a ``smash product'' that captures multiplicative
structures on cohomology theories: Roughly speaking, multiplicative
cohomology theories tend to be represented by ``homotopical
ring spectra'', defined in terms of monoids for the smash product.
Actions of homotopical ring spectra define ``homotopical module
spectra'', which represent cohomology theories that are modules over 
ring theories.  Properties of homotopical ring spectra often extend
to simplify computations involving homotopical module spectra, and
vice-versa.

The stable category with its smash product provides a good context for
stable homotopy theory, and the notions of homotopical ring and module
spectra suffice for many purposes, as amply demonstrated in the
literature since the 1960's.  On the other hand, as 
addressed by May and collaborators by the mid 1970's and as became
widely acknowledged by the mid 1980's, certain necessary constructions
require a stronger point-set foundation. For example, homotopy ring
spectra are the stable analogue of homotopy associative
$H$-spaces rather than the analogue of topological monoids; because of
this, few of the constructions available in the stable category
preserve homotopical module spectra.

The papers \cite{ekmm,hss,mmss} rewrote the foundations of stable
homotopy theory, providing several categories whose homotopy
categories are the stable category but which have symmetric monoidal
point-set smash products (before passing to the homotopy category).
Current terminology calls the monoids and commutative monoids for
these smash products $S$-algebras and commutative $S$-algebras; these
are essentially equivalent to the older notions of $A_{\infty}$ and
$E_{\infty}$ ring spectra, respectively.
As a consequence of the modern foundations, 
for an $S$-algebra $R$, the category of point-set left (or right)
$R$-modules has an intrinsic homotopy theory.  The homotopy category,
usually called the ``derived category'', shares most of the 
structure of the stable category and admits most of the usual
constructions in homotopy theory, with the possible exception of those
that require an internal smash product.

In general, for an $S$-algebra $R$, we can form the balanced product
``$\sma_{R}$'' of a right $R$-module and a left $R$-module as a
functor from the derived categories to the stable category
\[
\sma_{R}\colon \DRM\times \aD_{R}\to \aS
\]
(where $\aD_{R}$ denotes the derived category of left
$R$-modules, $\DRM$ denotes the derived category of right
$R$-modules, and $\aS$ denotes the stable category).
As in the case of ordinary rings in algebra, when $R$ is a commutative
$S$-algebra, 
left and right $R$-modules are equivalent, and the balanced product
lifts to an internal smash product
\[
\sma_{R}\colon \aD_{R}\times \aD_{R}\to \aD_{R},
\]
which is a closed symmetric monoidal product.
Unlike the case of ordinary rings in algebra, ring spectra admit an
infinite hierarchy of structures between $S$-algebra and commutative
$S$-algebra, the $E_{n}$ hierarchy of Boardman and Vogt~\cite{BV}.
An $E_{1}$ ring spectrum is an $A_{\infty}$ ring spectrum,
is equivalent to an $S$-algebra, and has a derived category of left
modules.  An $E_{\infty}$ ring spectrum is equivalent to a commutative
$S$-algebra and its derived category has a symmetric monoidal
product.  This paper begins the study of the
derived categories of left modules over $E_{n}$ ring spectra for
$1<n<\infty$.   The main theorem is:

\begin{main}
Let $R$ be an $E_{2}$ ring spectrum.
\begin{enumerate}
\item The derived category of left modules $\aD_{R}$ is equivalent to
the derived category of right modules $\DRM$ and has a
closed monoidal product $\sma_{R}$ extending the balanced product.
\item If $R$ is an $E_{3}$ ring
spectrum, then $\sma_{R}$ has a braiding.
\item If $R$ is an $E_{4}$ ring spectrum then the braiding is a
symmetry, i.e., $\aD_{R}$ is a closed symmetric
monoidal category.
\end{enumerate}
\end{main}

As one of the principle interests in constructing $E_{\infty}$ structures on
ring spectra has been to have a monoidal or symmetric monoidal category of
modules, for statements in the derived category, now merely an $E_{2}$
or $E_{4}$ structure suffices.  For example, Maria Basterra and
the author have shown that the Brown Peterson spectrum $BP$ at each
prime is an $E_{4}$ ring spectrum \cite{BMBP}; it is currently not
known whether it is an $E_{\infty}$ ring spectrum.

To avoid a point of possible confusion, we emphasize that the derived
category $\aD_{R}$ in the theorem above is the derived category of
left modules for $R$ regarded as an $A_{\infty}$ ring spectrum, and not, for
example, the derived category of operadic modules for $R$ regarded as
an $E_{n}$ ring spectrum.  See Section~\ref{secoutline} for a review
of the precise definition of $\aD_{R}$.

The main theorem addresses only the question of derived categories or
homotopy categories.  In fact, the smash product in the homotopy
category derives from a point-set level ``lax monoidal product''
\cite[3.1.1]{leinsterbook} or ``partial lax monoidal product'', which we
outline in  Section~\ref{secrefine}. In
lectures on this work dating back to 2004, the author has presented
the following general 
conjecture, converse to the main theorem (in the $E_{2}$ case): 

\begin{cmain}
Under suitable technical hypotheses, a lax or partial lax monoidal
product on a category with structure maps weak equivalences induces an
$E_{2}$ structure on the derived endomorphism ring spectrum of the unit.
\end{cmain}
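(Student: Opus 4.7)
The plan is to apply a derived version of Dunn's additivity theorem, $E_{1}\otimes E_{1}\simeq E_{2}$, to the derived endomorphism ring spectrum $R:=\bR\End(\ie)$ of the unit object $\ie$. This spectrum already carries an $E_{1}$-structure coming from composition in the category. The lax monoidal pairing supplies a second multiplication
\[
R\sma R \longrightarrow \bR\Hom(\ie\otimes\ie,\ie\otimes\ie) \simeq \bR\Hom(\ie,\ie) = R,
\]
where the displayed equivalence is inverted from the unit weak equivalence $\ie\otimes\ie\to\ie$. Bifunctoriality of $\otimes$ (the interchange law between composition and tensoring of morphisms) forces these two multiplications to commute up to coherent homotopy, and Dunn additivity then upgrades this bi-$E_{1}$ datum to an $E_{2}$ structure on $R$.

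More precisely, I would first make the operadic input explicit. The lax monoidal coherence isomorphisms arrange the collection of iterated tensor/composition operations on the category into an action of a colored operad whose monochromatic piece on the color of $\ie$ is weakly equivalent to the operad $E_{1}\otimes E_{1}$: one factor accounts for the order of composition, the other for the order of iterated tensor products, and the interchange identifies the two half-diagonals. Restricting this action to endomorphisms of $\ie$ and precomposing with the unitors extracts a bi-$E_{1}$ action on $R$, which a derived form of Dunn's additivity theorem converts into an $E_{2}$-algebra structure.

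The principal obstacle is locating the ``suitable technical hypotheses'' under which this can be carried through rigorously. In a partial lax monoidal product, many iterated tensor products exist only up to coherent weak equivalences, so one must first replace the given data by a point-set lax structure in which the needed iterated tensors, compositions, and unit maps exist strictly enough to feed operadic multicomposition. The model-categorical counterpart of this step will likely require a cofibrant-fibrant replacement of $\ie$, some Ind-like completion to guarantee existence of enough tensor products, and care that none of these replacements alters the homotopy type of $\bR\End(\ie)$. A secondary technical issue is transporting Dunn's additivity into the spectrum-enriched setting used in the main theorem; this is cleanest in the $\infty$-categorical framework, so the remaining task is a model-categorical translation.

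As a consistency check, running the construction on the lax monoidal product $\sma_{R}$ produced by the main theorem from an $E_{2}$ ring spectrum $R$ should recover the original $E_{2}$ structure on $R$ up to equivalence of $E_{2}$-algebras, confirming that the conjecture is genuinely a converse to part (i) of the main theorem. An analogous argument with one more degree of interchange (trading on the braiding, which in operadic terms is an $E_{1}\otimes E_{2}$ action) should give the corresponding $E_{3}$ statement, but the conjecture as stated concerns only the $E_{2}$ case.
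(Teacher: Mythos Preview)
The statement you are addressing is a \emph{Conjecture} in the paper, not a theorem; the paper does not prove it. What the paper offers (in Section~\ref{secrefine}) is a brief heuristic sketch: composition gives $\End(U)$ an $A_{\infty}$ (i.e., $E_{1}$) structure, the weakened monoidal product and the zigzag $U\otimes\cdots\otimes U\simeq U$ should induce a second (partial/$A_{\infty}$) multiplication, interchange should relate the two, and together they should rectify to an $E_{2}$ structure. The paper then notes that the problem has since been solved by Barwick, Gepner, and Lurie in the $\infty$-categorical framework.

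Your proposal is essentially the same heuristic, phrased in terms of Dunn additivity $E_{1}\otimes E_{1}\simeq E_{2}$ rather than ``rectification''; these are two names for the same mechanism. You also correctly flag the genuine obstacles: making the iterated tensors and the interchange strict enough to feed an operad, and having a spectrum-level Dunn theorem available. Those are exactly the reasons the statement remained a conjecture in the point-set setting and was ultimately resolved $\infty$-categorically. So your outline matches the paper's intended strategy, but neither yours nor the paper's discussion constitutes a proof; both are programmatic sketches, and that is the appropriate level of commitment for a statement the paper explicitly labels as conjectural.
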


The previous conjecture generalizes the Deligne Hochschild
cohomology conjecture, which is the special case of the
monoidal category of $(A,A)$-bimodules over a ring (or DG ring or
$S$-algebra).  In this case, the derived endomorphism DG algebra (or ring
spectrum) is the (topological) Hochschild cohomology complex.  The
(affirmed) Deligne conjecture is that this is an $E_{2}$ algebra
\cite{McClureSmith}. 

More generally, the author has advertised the problem of identifying
the point-set structure on the category of modules over an $E_{n}$
ring spectrum (for $n>2)$, extending the lax monoidal structure.
Once identified, a corresponding converse conjecture could be
formulated.  With the new understanding of quasi-categories that
has developed in the time since the author first announced the main
theorem, the conjecture above and its generalization to $E_{n}$ ring
spectra (for all $n$) have become feasible to approach.  The author
understands that these and related problems have since been solved
by Clark Barwick \cite{HCA} and David Gepner \cite{Gepner}; see also
Lurie's treatment in \cite[2.3.15]{DAGVI}. 

\subsection*{Acknowledgments}
The author thanks Maria Basterra, Andrew Blumberg, and Tony Elmendorf
for useful conversations and suggestions.

\section{Outline and Preliminaries}\label{secoutline}

Although the constructions in this paper would presumably work in any modern
(topological) category of spectra, for definiteness we work in the
category of EKMM $S$-modules; this allows us to take some technical
shortcuts in several places using the fact that all
objects are fibrant.  For $E_{n}$ algebras, we work
exclusively with the \term{little 
$n$-cubes} operads $\LC{n}$ of Boardman and Vogt \cite{BV}: An
element of $\LC{n}(m)$ consists of $m$ almost disjoint
\term{sub-cubes} of the unit cube $[0,1]^{n}$,
labelled $1,\ldots,m$, of the form
\[
[x^{i}_{1},y^{i}_{1}]\times \cdots \times [x^{i}_{n}, y^{i}_{n}]
\]
(for $0\leq x^{i}_{j}<y^{i}_{j}\leq 1$, but generally not with equal
side lengths $y^{i}_{j}-x^{i}_{j}$: These are affinely embedded sub-cubes,
rather than actual geometric sub-cubes). 
An $E_{n}$ algebra
in this 
context is then an $S$-module $R$ together with an action
\[
\LC{n}(m)_{+}\sma_{\Sigma_{m}} R^{(m)} \to R
\]
satisfying the usual properties (where $R^{(m)}=R \sma_{S} \cdots
\sma_{S} R$).  As a technical remark for those familiar with $E_{n}$
ring spectra in the sense of Lewis and May \cite{lms}, we note
that this is precisely an $E_{n}$ ring spectrum $R$ for the operad
$\LC{n}\times \oL$ (where $\oL$ denotes the linear isometry operad)
such that the underlying $\bL$-spectrum of $R$
(q.v.~\cite[II.4.1--2]{ekmm}) is an 
$S$-module \cite[II.1.1]{ekmm}.  The usual theory \cite{MayGILS}
(cf.~\cite[XII\S1,II\S4]{ekmm}) shows that any other sort of $E_{n}$
ring spectrum is equivalent to one of this type in an essentially
unique way.

We denote by $\AC$ the non-$\Sigma$ operad of little $1$-cubes: An element
of $\AC(k)$ is a sequence of $k$ almost disjoint
sub-intervals of the unit interval in order.
Then $\AC(k)\subset \LC{1}(k)$, and as an operad $\LC{1}\iso \AC \times
\Sigma$; thus, $\AC$-algebras and 
$\LC{1}$-algebras coincide.  We regard $\LC{n}$-algebras as
$\AC$-algebras via the usual inclusion of $\LC{1}$ in $\LC{n}$ (taking
a sub-interval $[x,y]$ to the sub-cube $[x,y]\times [0,1]^{n-1}$).

For a $\LC{n}$-algebra $R$, we understand a left $R$-module to be an
operadic left module for $R$ regarded as an $\AC$-algebra.  In other
words, a left $R$-module consists of an $S$-module $M$ and maps of
$S$-modules 
\[
\AC(m+1)_{+}\sma R^{(m)}\sma_{S}M\to M
\]
for all $m$, satisfying the usual associativity and unit diagrams (as
in, for example, \cite[I.4.2.(ii)]{kmbook}), reviewed in
Section~\ref{seclmea}. We use $\aM_{R}$ to denote the category of left
$R$-modules.  For purely formal reasons, $\aM_{R}$ is a category of
modules over an $S$-algebra $U_{\AC}R$ (or just $UR$), the \term{left
module enveloping algebra} of $R$, which we review in
Section~\ref{seclmea}.  In fact, using the details of the little
$1$-cubes non-$\Sigma$ operad $\AC$, we give a concrete description of
$UR$.  Using that description, we prove the following result on
enveloping algebras.  This result is a special feature of $\AC$ not
shared by a general $A_{\infty}$ operad without additional hypotheses
on the $A_{\infty}$ algebra $R$.

\begin{thm}\label{thmACnice}
For any $\AC$-algebra $R$, the canonical map of left $UR$-modules $UR\to R$
induced by the unit of $S\to R$ is a homotopy equivalence of $S$-modules.
\end{thm}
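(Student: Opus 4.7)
The plan is to combine the explicit description of $UR$ promised in Section~\ref{seclmea} with the contractibility of the configuration spaces $\AC(m+1)$ and the unit axiom for the $\AC$-algebra $R$ to produce an explicit deformation retraction of $UR$ onto $R$. Concretely, $UR$ will be presented as a quotient of $\bigvee_{m\ge 0}\AC(m+1)_{+}\sma R^{(m)}$ by the relations identifying operadic composition at non-module slots with the $\AC$-algebra structure of $R$; the canonical map sends the class of $(c,r_{1},\ldots,r_{m})$ to $c(r_{1},\ldots,r_{m},\ie)$, where $\ie\colon S\to R$ is the unit.

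First, I would construct a section $\sigma\colon R\to UR$ up to homotopy. Using any point $\iota\in\AC(2)$, set $\sigma(r)$ to be the class of $(\iota,r)\in\AC(2)_{+}\sma R$; the composite $(UR\to R)\circ\sigma$ then sends $r$ to $\iota(r,\ie)$, which is homotopic to $r$ by the unit axiom for $R$. After adjusting by this homotopy (which is determined by $\iota$ and the $\AC$-algebra structure), we obtain a section on the nose.

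Second, I would construct a homotopy from $\mathrm{id}_{UR}$ to $\sigma$ composed with the canonical map. The idea is that for a configuration $c\in\AC(m+1)$, we slide the first $m$ subintervals rightward so that they are absorbed, one by one, into the module interval. Each time an interval is absorbed, the unit axiom for $R$ allows us to replace the corresponding factor of $R$ by $\ie$, reducing the level by one. The contractibility of $\AC(m+1)$, together with the linear ordering of the intervals, makes such absorption paths available at every level, and iteration brings any class in $UR$ down into the image of $\sigma$.

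The main obstacle is compatibility with the relations defining $UR$. The sliding deformation is defined \emph{a priori} on the wedge $\bigvee_{m}\AC(m+1)_{+}\sma R^{(m)}$, but must descend to $UR$, meaning it must commute with operadic composition at non-module slots. Here the concrete geometry of the non-$\Sigma$ little $1$-cubes operad is essential: the ``nested intervals'' model of composition allows sliding at a given interval to be performed without disturbing compositions that have been (or will be) carried out at other intervals. A general $A_{\infty}$ operad offers no such canonical, composition-compatible absorption paths, which is precisely why the theorem is special to $\AC$.
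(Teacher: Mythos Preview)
Your outline is in the right spirit but misses the key simplification the paper actually uses. You take the ``explicit description'' of $UR$ to be the coequalizer of $\bigvee_{m}\AC(m+1)_{+}\sma R^{(m)}$ and propose level-wise sliding homotopies that must then be checked to descend through the relations. The paper instead first proves (Theorem~\ref{thmUR}) that $UR$ is \emph{isomorphic} as an $S$-algebra to a much smaller model $A=AR$, built as a pushout of $D_{+}\sma R$ and $\bar D_{+}\sma S$ along $D_{+}\sma S$, where $D\subset\AC(2)$ is the contractible subspace of pairs $([0,a],[a,b])$ and $\bar D=\AC(1)$. The isomorphism comes from the decomposition $f_{m}\colon\AC(m+1)\overset{\iso}{\to} D\times\AC(m)$ that peels off the last (module) subinterval; the coequalizer relations then collapse every level to level~$1$ in a single step, not iteratively.

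Once $UR$ is replaced by $A$, the homotopy equivalence is almost immediate: $\chi\colon A\to R$ forgets the second subinterval and applies the $\AC(1)$-action; the section $\psi\colon R\to A$ includes at $([0,1/2],[1/2,1])\in D$; and the two required homotopies are linear paths in the contractible spaces $D$ and $\AC(1)$. No descent through relations is needed, because there are no relations left in the model $A$.

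Your iterative absorption scheme could likely be made rigorous, but the step you yourself identify as the main obstacle---showing the level-wise homotopies are compatible with the coequalizer relations---is precisely the part you have not carried out, and carrying it out carefully would amount to reproving Theorem~\ref{thmUR} in a more cumbersome way. The paper's route buys a clean separation of concerns: one combinatorial isomorphism (Theorem~\ref{thmUR}), then a trivial contractibility argument.
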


We understand the derived category of left $R$-modules $\aD_{R}$ to be
the derived category of $UR$-modules $\aD_{UR}$ \cite[III\S2]{ekmm},
obtained by formally inverting the weak equivalences.  We build
the smash product on $\aD_{R}$ in the Main Theorem by combining a
formal construction on $\aM_{R}$ with some homotopical results.

The formal construction involves the ``interchange'' property of the
operads $\LC{1}$ and $\LC{n-1}$ for a $\LC{n}$-algebra $R$.  Pairwise
cartesian product of sub-cubes defines a map
\[
\LC{1}(\ell)\times \LC{n-1}(m)\to \LC{n}(\ell m)
\]
that is a \term{pairing of operads} \cite{MayPairings}.
We use this pairing in Section~\ref{secpair} to associate to every
element of $\LC{n-1}(m)$ a natural map
of $\AC$-algebras 
\[
R\sma_{S}\cdots \sma_{S}R\to R
\]
and hence a map of $S$-algebras
$U(R^{(m)})\to UR$.  As a variant of 
this, for any space $X$  and map $f\colon X\to \LC{n-1}(m)$, 
$UR\sma X_{+}$ becomes a $(UR,U(R^{(m)}))$-bimodule, and hence defines a
functor 
\[ 
f_{*}\colon \aM_{R^{(m)}}\to \aM_{R}, \qquad f_{*}M = UR\sma X_{+}\sma_{U(R^{(m)})}M.
\]
The diagonal map $\AC\to \AC^{m}$ defines a map of $S$-algebras
\begin{equation}\label{eqURdiag}
U(R^{(m)})=U_{\AC}(R^{(m)})\to U_{\AC^{m}}(R^{(m)})\iso
(UR)^{(m)},
\end{equation}
which defines a forgetful or pullback functor
\[
\aM_{(UR)^{(m)}}\to \aM_{U(R^{(m)})}=\aM_{R^{(m)}}.
\]
Composing these functors with the smash product over $S$
\[
\aM_{R}\times \cdots \times \aM_{R}=
\aM_{UR}\times \cdots \times \aM_{UR}\to
\aM_{(UR)^{(m)}},
\]
we obtain a functor 
\begin{equation}\label{eqdefopn}
\opn{f}\colon \aM_{R}\times \cdots \times \aM_{R}\to \aM_{R}.
\end{equation}
In other words,
\[
\Lambda_{f}(M_{1},\ldots,M_{m})=UR\sma X_{+}\sma_{U(R^{(m)})}(M_{1}\sma_{S}\cdots \sma_{S}M_{m}).
\]
We call these operations \term{$E_{n}$ interchange operations}.

When $n=2$, we use $X=*$ and $f$ the element $\mu =([0,1/2],[1/2,1])$
\[
\subseg{1/2}{4em}\subseg{1/2}{4em}
\]
of $\AC(2)\subset \LC{1}(2)$ to construct a functor $\opn{\mu}$ that
provides point-set version of
the smash product functor for the Main Theorem.  We use $X$ an
interval and $f$ a path $\alpha$ from
\begin{gather*}
\subseg{1/4}{2em}\subseg{1/4}{2em}\subseg{1/2}{4em}
\qquad \text{to}\qquad
\subseg{1/2}{4em}\subseg{1/4}{2em}\subseg{1/4}{2em},
\end{gather*}
in $\AC(3)\subset \LC{1}(3)$ as a key component of the construction
of the associativity 
isomorphisms (in $\aD_{R}$) for the smash product (see
also~\ref{eqconsass} below).  We use maps 
from the pentagonal disk to $\LC{1}$ to establish coherence; see
Section~\ref{secsma} for details.  For $n=3$, we use a path like the
one pictured
\[
\vcenter{\hbox{\includegraphics[width=.15\hsize]{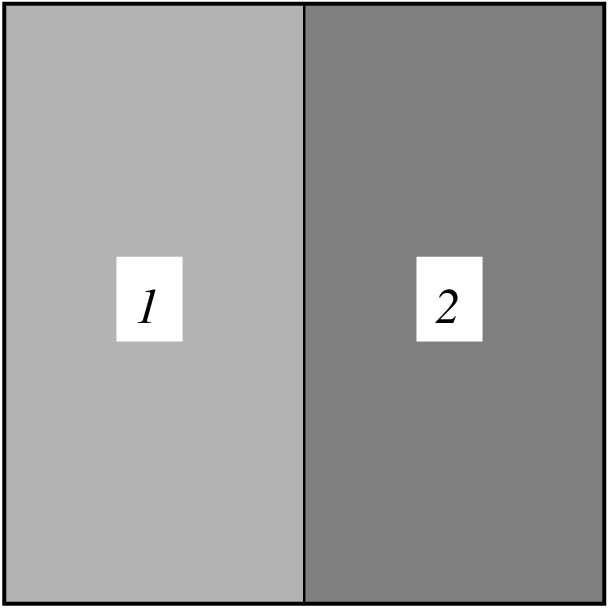}}}
\qquad
\vcenter{\hbox{\includegraphics[width=.15\hsize]{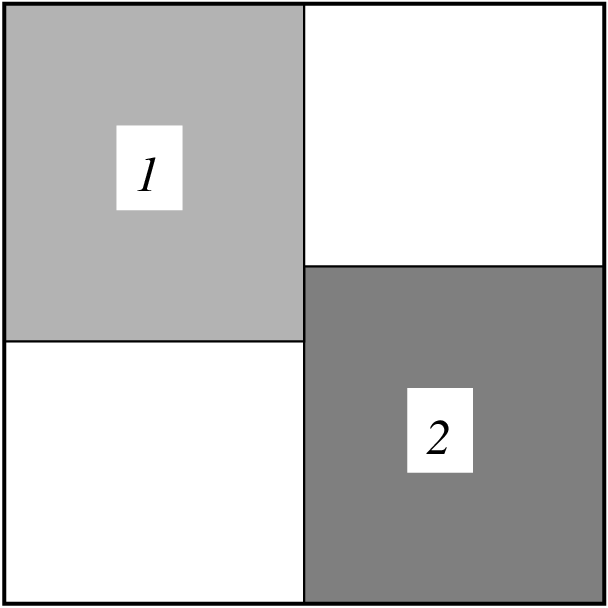}}}
\qquad
\vcenter{\hbox{\includegraphics[width=.15\hsize]{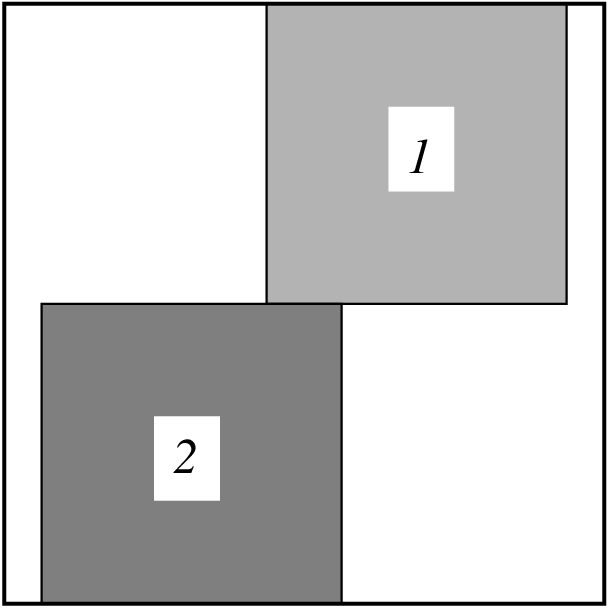}}}
\qquad
\vcenter{\hbox{\includegraphics[width=.15\hsize]{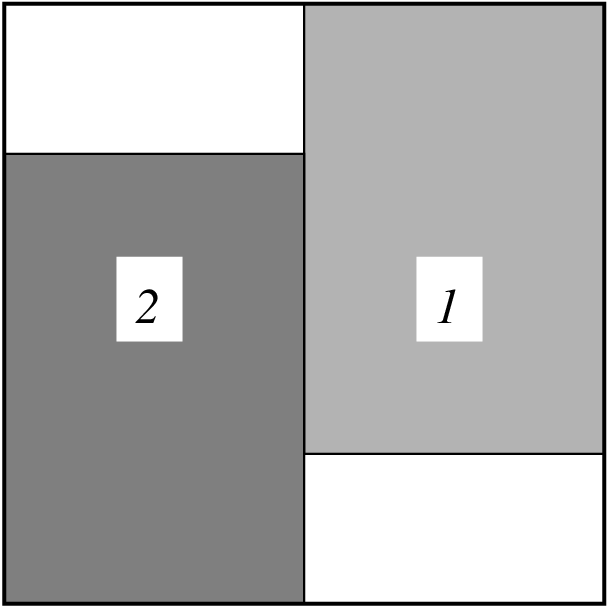}}}
\qquad
\vcenter{\hbox{\includegraphics[width=.15\hsize]{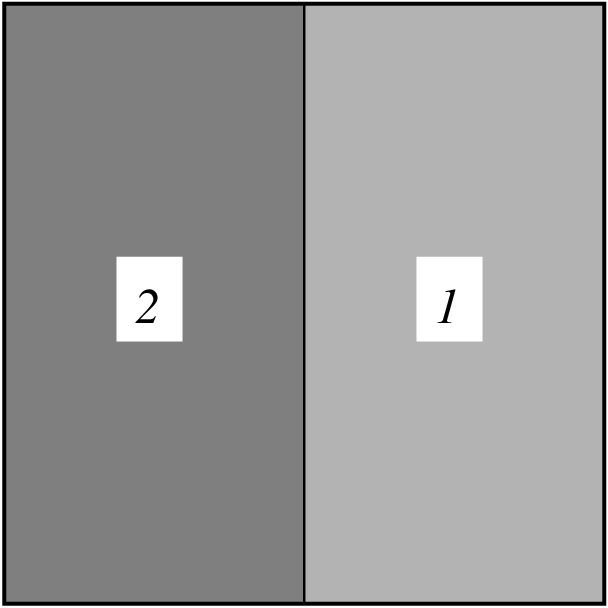}}}
\]
in $\LC{2}(2)$ to construct a braiding, and for $n=4$, a null homotopy
in $\LC{3}(2)$ of the composition of such paths to prove the symmetry.  See
Section~\ref{secsma} for details.

The $E_{n}$ interchange operations $\opn{f}$ do not strictly preserve 
composition, and this introduces some complications into the formal
picture.  To illustrate, let $f\colon X\to \LC{1}(2)$ and $g\colon Y\to
\LC{1}(2)$.  We obtain a map $f\circ_{2}g\colon X\times Y\to \LC{1}(3)$
by operadic composition and hence a functor
\[
\opn{f\circ_{2}g}\colon \aM_{R}\times \aM_{R}\times \aM_{R}\to \aM_{R},
\]
which is defined by
\[
\opn{f\circ_{2}g}(L,M,N)=UR\sma (X\times Y)_{+}\sma_{U(R^{(3)})}
(L\sma_{S}M \sma_{S}N).
\]
On the other hand, the composition of operations
$\opn{f}\circ_{2}\opn{g}$ is the functor
\begin{multline*}
\opn{f}(L,\opn{g}(M,N))=
UR\sma X_{+}\sma_{U(R^{2})}
  (L\sma_{S}(UR\sma Y_{+}\sma_{U(R^{(2)})}(M\sma_{S}N)))\\
\iso UR \sma (X\times Y)_{+}\sma_{UR\sma_{S}U(R^{(2)})}
  (L\sma_{S}M\sma_{S}N).
\end{multline*}
Specifically, $\opn{f\circ_{2}g}$ treats the $(UR)^{(3)}$-module
$L\sma_{S}M\sma_{S}N$ as a $U(R^{(3)})$-module, while
$\opn{f}\circ_{2}\opn{g}$ treats it as a
$UR\sma_{S}U(R^{(2)})$-module.  A generalization of~\eqref{eqURdiag}
induces a map 
of $S$-algebras from 
$U(R^{(3)})$ to $UR\sma_{S}U(R^{(2)})$, and so induces a natural
transformation 
\[
\opn{f\circ_{2}g}\to \opn{f}\circ_{2}\opn{g}.
\]
More generally, for a $\LC{n}$-algebra $R$,
given a map $f\colon X\to
\LC{n-1}(m)$ and maps $g_{i}\colon Y_{i}\to \LC{n-1}(j_{i})$, we have
a natural transformation 
\begin{equation}\label{eqopmap}
\opn{f\circ (g_{1},\ldots,g_{m})}
\to \opn{f}\circ (\opn{g_{1}},\ldots,\opn{g_{m}})
\end{equation}
of functors $\aM_{R}\times \cdots \times \aM_{R}$ to $\aM_{R}$.
Although this transformation is not an isomorphism, in
Section~\ref{secmc}, we show that it 
is often a weak equivalence.

\begin{thm}\label{thmopcomp}
With notation as above, for cofibrant $R$-modules $M_{1},\ldots,
M_{j}$ with $j=j_{1}+\cdots
+j_{m}$, the natural map~\eqref{eqopmap} 
\[
\opn{f\circ (g_{1},\ldots,g_{m})}(M_{1},\ldots,M_{j})
\to \opn{f}(\opn{g_{1}}(M_{1},\ldots,M_{j_{1}}),\ldots,
\opn{g_{m}}(M_{j-j_{m}+1},\ldots,M_{j}))
\]
is a weak equivalence.
\end{thm}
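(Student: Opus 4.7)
The plan is to identify the natural transformation~\eqref{eqopmap} with a change-of-scalars map along a canonical $S$-algebra map $\phi\colon U(R^{(j)})\to A$, where $A := U(R^{(j_{1})})\sma_{S}\cdots \sma_{S}U(R^{(j_{m})})$, and then show $\phi$ is a weak equivalence by appealing to Theorem~\ref{thmACnice}.  Setting $Z := X\times Y_{1}\times \cdots \times Y_{m}$ and $N := M_{1}\sma_{S}\cdots \sma_{S}M_{j}$, the rearrangement of smash products illustrated in the $m=2$ case in the excerpt rewrites both sides of~\eqref{eqopmap} in the form $(UR\sma Z_{+})\sma_{B}N$: in the source, $B = U(R^{(j)})$, and in the target, $B = A$.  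The two module structures on $N$ (respectively on $UR\sma Z_{+}$) are pulled back from a common underlying $(UR)^{(j)}$-action (respectively from the operadic right-action built out of $f$ and the $g_{i}$'s), so~\eqref{eqopmap} is precisely the map on balanced products induced by $\phi$.

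Next I would verify that $\phi$ is a weak equivalence.  By Theorem~\ref{thmACnice} applied to the $\AC$-algebras $R^{(j)}$ and each $R^{(j_{i})}$, the canonical unit maps $U(R^{(j)})\to R^{(j)}$ and $U(R^{(j_{i})})\to R^{(j_{i})}$ are homotopy equivalences of $S$-modules.  Because the EKMM smash product $\sma_{S}$ preserves homotopy equivalences, the target map $A\to R^{(j_{1})}\sma_{S}\cdots \sma_{S}R^{(j_{m})}=R^{(j)}$ is also a homotopy equivalence.  The composite $U(R^{(j)})\to A\to R^{(j)}$ agrees with the canonical unit of $U(R^{(j)})$, so two-out-of-three forces $\phi$ to be a homotopy equivalence of underlying $S$-modules and hence a weak equivalence of $S$-algebras.

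The final step is to deduce that the induced change-of-scalars map
\[
(UR\sma Z_{+})\sma_{U(R^{(j)})}N \longrightarrow (UR\sma Z_{+})\sma_{A}N
\]
is a weak equivalence from the fact that $\phi$ is.  Since each $M_{i}$ is cofibrant as a $UR$-module, $N$ is cofibrant as a $(UR)^{(j)}$-module and hence sufficiently flat for the pulled-back actions of $U(R^{(j)})$ and of $A$; and because the spaces arising in the applications are CW, $UR\sma Z_{+}$ is a cofibrant right $B$-module for both choices of $B$.  Thus both un-derived balanced products compute the corresponding derived balanced products, and the standard invariance of the derived balanced product under a weak equivalence of $S$-algebras (EKMM-style) yields the result.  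The main obstacle I anticipate is this last bit of cofibrancy bookkeeping: specifically, verifying that $UR\sma Z_{+}$ is a cofibrant right module over both $U(R^{(j)})$ and $A$ (not merely over $UR$), which should reduce to a skeletal filtration argument on $Z$ combined with the ``free'' character of $UR$ as a building block for operadic modules, but requires careful assembly.
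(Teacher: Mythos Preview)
There is a genuine gap in step~1. Unwinding carefully, the target of~\eqref{eqopmap} is
\[
\bigl[URf\sma_{U(R^{(m)})}(URg_{1}\sma_{S}\cdots\sma_{S}URg_{m})\bigr]\sma_{A}N,
\]
and the bracketed $(UR,A)$-bimodule is $(UR\sma X_{+})\sma_{U(R^{(m)})}(UR)^{(m)}\sma(Y_{1}\times\cdots\times Y_{m})_{+}$, which is \emph{not} isomorphic to $UR\sma Z_{+}$: the extension of scalars along the diagonal $U(R^{(m)})\to(UR)^{(m)}$ does not collapse to an isomorphism. (The ``$\iso$'' in the Section~\ref{secoutline} display you are relying on should be read heuristically; it is only a weak equivalence.) So~\eqref{eqopmap} is not purely a change of scalars along your $\phi$, and the two-out-of-three reduction to Theorem~\ref{thmACnice} does not apply as written.

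The cofibrancy issue you anticipate in step~3 is a symptom of the same missing ingredient. Theorem~\ref{thmACnice} only gives homotopy equivalences of $S$-modules, but transporting cofibrancy of $N$ from $(UR)^{(j)}$-modules down to $U(R^{(j)})$- or $A$-modules requires a homotopy equivalence of \emph{left modules} over the smaller ring. The paper supplies exactly this via Lemma~\ref{lemcompu}: the diagonal $U(R^{(m)})\to(UR)^{(m)}$ is a homotopy equivalence of left $U(R^{(m)})$-modules, established by explicit module-linear homotopies on the concrete model of Section~\ref{seclmea}. With this in hand the paper factors both sides of~\eqref{eqopmap} as $(-)\sma_{(UR)^{(j)}}N$ (where $N$ is genuinely cofibrant by hypothesis), reduces to a map of $(UR,(UR)^{(j)})$-bimodules, and applies Lemma~\ref{lemcompu} twice. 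That module-linear strengthening is the essential idea your proposal is missing.
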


We apply Theorem~\ref{thmopcomp} in Section~\ref{secsma}
to construct the coherence isomorphisms in
$\aD_{R}$ for the Main Theorem.  For example, for $\mu\in \LC{1}(2)$
and $\alpha\colon I\to \LC{1}(3)$ as above, the maps
\begin{equation}\label{eqconsass}
\opn{\mu}\circ_{2}\opn{\mu}
\from \opn{\mu\circ_{2}\mu}\to \opn{\alpha}
\from \opn{\mu\circ_{1}\mu}\to \opn{\mu}\circ_{1}\opn{\mu}
\end{equation}
induce isomorphisms in $\aD_{R}$, which construct the associativity
isomorphism for the Main Theorem.  See Section~\ref{secsma} for
details.  To make this work and to use~\eqref{eqconsass} to construct
an isomorphism of left derived functors, we need to understand
composition of the left derived functors of the operations $\opn{f}$.
For this, we 
have the following theorem proved in Section~\ref{secmc}.

\begin{thm}\label{thmhocof}
Let $R$ be a $\LC{n}$-algebra, $f\colon X\to
\LC{n-1}(m)$ a map, and $M_{1},\ldots,M_{m}$ $R$-modules.  If $X$ is
homotopy equivalent to a CW complex and $M_{1},\ldots, M_{n}$ are
homotopy equivalent to cofibrant $R$-modules, then
$\opn{f}(M_{1},\ldots,M_{m})$ is homotopy equivalent to a cofibrant
$R$-module. 
\end{thm}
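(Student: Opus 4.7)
The plan is to reduce to the case of cofibrant $R$-modules and a CW complex $X$ via homotopy invariance, factor $\opn{f}$ as a composite of three standard functors, and resolve the failure of cofibrancy-preservation by the middle (restriction) functor using Theorem~\ref{thmACnice}.

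\textbf{Reduction.} Since both hypothesis and conclusion are stated up to homotopy equivalence, it suffices to show that $\opn{f}$ preserves homotopy equivalences in its arguments under the given cofibrancy hypotheses and then to treat the case when $X$ is itself a CW complex and each $M_i$ is itself cofibrant as a $UR$-module. Smashing with a homotopy equivalence of unbased spaces is a homotopy equivalence of $S$-modules, and homotopy invariance in the module variables is a pushout-product argument of the sort already employed in Section~\ref{secmc} for Theorem~\ref{thmopcomp}.

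\textbf{Factorization.} Write $\opn{f}$ as the composite
\[
\aM_R \times \cdots \times \aM_R \xrightarrow{\sma_{S}} \aM_{(UR)^{(m)}} \xrightarrow{\mathrm{res}} \aM_{U(R^{(m)})} \xrightarrow{UR \sma X_{+} \sma_{U(R^{(m)})} (-)} \aM_R,
\]
where the middle arrow is restriction of scalars along \eqref{eqURdiag}. The external smash product sends cofibrant tuples to cofibrant $(UR)^{(m)}$-modules by the EKMM pushout-product axiom. The rightmost functor is a left adjoint, so it preserves cofibrant objects whenever the bimodule $UR \sma X_{+}$ is cofibrant as a right $U(R^{(m)})$-module; for $X$ a CW complex, a filtration by skeleta reduces this to the base case $X = *$, where the question is whether $UR$, with its right $U(R^{(m)})$-action coming from $f(*) \in \LC{n-1}(m)$, is cofibrant as a right $U(R^{(m)})$-module.

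\textbf{Main obstacle.} Neither the middle restriction nor the base case of the cellular induction is automatic: restriction along a map of $S$-algebras does not generally preserve cofibrancy, and $UR$ need not be cofibrant as a right $U(R^{(m)})$-module. The key input that resolves both difficulties is Theorem~\ref{thmACnice}: $UR \to R$ is a homotopy equivalence of $S$-modules. From this, together with naturality and the same result applied to the $\AC$-algebra $R^{(m)}$, one deduces that the comparison map \eqref{eqURdiag}, $U(R^{(m)}) \to (UR)^{(m)}$, is itself a weak equivalence of $S$-algebras, since both sides are weakly equivalent to $R^{(m)}$. Combining this with the explicit cell description of $UR$ from Section~\ref{seclmea} underlying the proof of Theorem~\ref{thmACnice}, I would exhibit $UR \sma X_{+}$ as weakly equivalent, as a right $U(R^{(m)})$-module, to a bimodule built cell-by-cell from the CW structure of $X$ and the cell structure of $UR$. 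Its balanced product with any cofibrant $U(R^{(m)})$-module is then cofibrant as a $UR$-module, and weakly equivalent to $\opn{f}(M_1,\ldots,M_m)$, yielding the conclusion.
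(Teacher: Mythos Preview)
Your overall architecture matches the paper's: factor $\opn{f}$ through external smash, restriction along~\eqref{eqURdiag}, and the balanced product with $URf$; identify restriction as the problematic step; and resolve it via a statement about the map $U(R^{(m)})\to (UR)^{(m)}$. The paper then finishes exactly as you suggest, by cell induction reducing to $URf\sma_{U(R^{(m)})}U(R^{(m)})\sma_{S}S^{n}_{S}\iso UR\sma_{S}S^{n}_{S}\sma X_{+}$.

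The gap is in your ``Main obstacle'' paragraph. From Theorem~\ref{thmACnice} applied to $R$ and to $R^{(m)}$ you correctly deduce that $U(R^{(m)})\to (UR)^{(m)}$ is a homotopy equivalence of $S$-modules, hence a weak equivalence of $S$-algebras. But that is \emph{not} enough: restriction along a weak equivalence of $S$-algebras is a right Quillen functor and has no reason to preserve (homotopy) cofibrancy. What the paper actually proves (Lemma~\ref{lemcompu}) is the stronger statement that this map is a homotopy equivalence \emph{of left $U(R^{(m)})$-modules}; equivalently, $(UR)^{(m)}$ is homotopy equivalent to the free rank-one module over $U(R^{(m)})$. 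That is what makes the cell induction in Proposition~\ref{propcompcof} go through. Your deduction from Theorem~\ref{thmACnice} cannot produce this: the inverse homotopy equivalence $\psi\colon R\to UR$ constructed there is explicitly \emph{not} a $UR$-module map, so smashing $m$ copies gives no $U(R^{(m)})$-linear section.

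The paper's proof of Lemma~\ref{lemcompu} is a separate explicit construction on the model $A(R^{(m)})$: a retraction $g\colon D^{m}\to D$ (take maxima and minima of endpoints) and a linear homotopy $h_{t}$ on $D^{m}$, together with a computation that $p(\cubea,h_{t}(-))=h_{t}(p(\cubea,-),\ldots)$ and similarly for $q$, which is exactly the compatibility with the left $U(R^{(m)})$-action. Your final sentence (``combining this with the explicit cell description\ldots I would exhibit\ldots'') gestures in this direction but does not supply the construction; that construction is the actual content of the proof, and it does not follow from Theorem~\ref{thmACnice}.
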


This theorem in particular implies that the left derived functor of a
composite of $E_{n}$ interchange operations is the corresponding 
composite of derived functors. 

\subsection*{Outline}
In Section~\ref{seclmea}, we review the left module enveloping algebra
and prove Theorem~\ref{thmACnice}.  In Section~\ref{secmc}, we study
the homotopy theory of $R$-modules and the operations $\opn{f}$; we
prove Theorems~\ref{thmopcomp} and~\ref{thmhocof}.  In
Section~\ref{secsma}, we apply this theory to prove the Main Theorem.
Section~\ref{secrefine} discusses the point-set lax monoidal
refinement of the constructions that go into the proof of the Main
Theorem.  Section~\ref{secrefine} also discusses the converse
conjecture in the introduction and further generalizations of the
Deligne conjecture (and their converses).

The final section, Section~\ref{secmoore}, bears no direct
relationship to the Main Theorem, but rather provides a follow-up to the
proof of Theorem~\ref{thmACnice} and the concrete description of the
left module enveloping algebra $UR$.  For an $\AC$-algebra $R$, an
alternative concrete construction, like the construction of the Moore
loop space, produces an associative algebra $\MA$ that we call the ``Moore
algebra''.  In Section~\ref{secmoore}, we construct a natural zigzag
of weak equivalences between the left module enveloping algebra $UR$
and the Moore algebra $\MA$.  This then relates the categories of
$R$-modules to $\MA$-modules. 

\section{The Left Module Enveloping Algebra}\label{seclmea}

For an $\AC$-algebra $R$, a left $R$-module
consists of an $S$-module $M$ together with action maps
\[
\xi_{m}\colon \AC(m+1)_{+}\sma R^{(m)}\sma M\to M
\]
satisfying the usual conditions.  Writing $\zeta$ for the $\AC$-algebra
multiplication of $R$, these conditions are the
associativity diagrams
\[ \def\objectstyle{\scriptstyle}\def\labelstyle{\scriptstyle}
\xymatrix@R-.5pc@C-1pc{%
\bigl(\AC(m+1)\times \bigl(\AC(j_{1})\times \cdots \times
\AC(j_{m})\times \AC(j_{m+1}+1)\bigr)\bigr)_{+} \sma R^{(j)}\sma_{S}M
\ar[r]^(.68){\relax\circ \sma\id}\ar[d]_{\iso}
&\AC(j+1)_{+}\sma R^{(j)}\sma_{S}M
\ar[dd]^{\xi_{j}}
\\
\AC(m+1)_{+}\sma 
\bigl((\AC(j_{1})_{+}\sma R^{(j_{1})}) \sma_{S}\cdots 
\sma_{S}(\AC(j_{m})_{+}\sma R^{(j_{m})})
\sma_{S} (\AC(j_{m+1}+1)_{+}\sma R^{(j_{m})}\sma_{S}M)\bigr){\hbox to -3.5pc{\hss}}
\ar[d]_{\id\sma \zeta_{j_{1}}\sma\cdots \sma\zeta_{j_{m}}\sma\xi_{j_{m+1}}}\\
\AC(m+1)_{+}\sma R^{(m)}\sma_{S}M\ar[r]_{\xi_{m}}
&M
}
\]
(for $m,j_{1},\ldots,j_{m+1}\geq 0$ and $j=j_{1}+\cdots +j_{m+1}$)
and the unit diagram
\[
\xymatrix{%
\{\ie\}_{+}\sma R^{(0)}\sma_{S}M\ar[r]\ar[d]_{\iso}&\AC(1)_{+}\sma
R^{(0)}\sma_{S}M \ar[d]^{\xi_{0}}\\
S^{0}\sma S\sma_{S}M\ar[r]_{\iso}&M,
}
\]
where $\ie$ denotes the identity element of $\AC(1)$ (the whole
sub-interval $[0,1]$ of $[0,1]$).
The action maps (as generators), the associativity diagrams (as
relations), and the unit diagram (as the unit) implicitly specify an
$S$-algebra $UR$ that encodes an $R$-module structure.  We begin with
this construction.

Let $UR$ be the $S$-module formed as the coequalizer of the following diagram:
\[  \def\objectstyle{\displaystyle}
\xymatrix@C-1pc{%
\bigvee_{m,j_{1},\ldots,j_{m}}
\bigl(\AC(m+1)\times \bigl(\AC(j_{1})\times \cdots \times \AC(j_{m})\bigr)\bigr)_{+}\sma R^{(j)}
\ar[r]<2ex>\ar[r]<.5ex>&
\bigvee_{m}\AC(m+1)_{+}\sma R^{(m)},
}
\]
where one map is induced by the operadic multiplication $\circ$ and
the other by the $\AC$-algebra multiplication of $R$.  The operadic
composition of $\AC$ using the last sub-interval,
\[
\AC(m+1)\circ_{m+1}\AC(k+1) \to \AC(m+k+1),
\]
induces a multiplication map $UR\sma_{S}UR\to UR$, and the inclusion of the
element $\ie$ in $\AC(1)$ induces a unit map $S\to UR$.  Since the
operadic composition is associative and unital,
\[
\cubea\circ_{\ell+1} (\cubeb \circ_{m+1} \cubec) = 
(\cubea \circ_{\ell+1} \cubeb)\circ_{\ell+m+1}\cubec
\qquad \text{and} \qquad
\ie \circ_{1} \cubea = \cubea = \cubea\circ_{m+1} \ie,
\]
it follows that the multiplication and unit maps make $UR$ into an
associative $S$-algebra.

\begin{defn}\label{deflmenv}
The $S$-algebra $UR$ is called the \term{left module enveloping algebra}.
\end{defn}

Comparing the universal property defining $UR$ with the data defining
a left $R$-module leads to the following proposition
(cf. \cite[1.6.6]{gk}, \cite[I.4.10]{kmbook}):

\begin{prop}\label{propenvalg}
A left $R$-module structure on an $S$-module determines and is
determined by a left $UR$-module structure.
\end{prop}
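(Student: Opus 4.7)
The plan is to verify this proposition directly from the coequalizer definition of $UR$ and the matching shapes of the defining data on both sides.

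First, I would construct a $UR$-module structure from a left $R$-module structure. Given action maps $\xi_{m}\colon \AC(m+1)_{+}\sma R^{(m)}\sma_{S}M\to M$, assemble them into a single map
\[
\myop{\bigvee}_{m} \AC(m+1)_{+}\sma R^{(m)}\sma_{S}M \to M.
\]
The two associativity diagrams displayed in the section — one built from operadic composition $\circ$ and one from the $\AC$-multiplication $\zeta$ of $R$ — say precisely that this map equalizes the pair of arrows in the coequalizer defining $UR$ (after smashing the coequalizer diagram on the right with $M$ over $S$, which is preserved since $\sma_{S}M$ is a left adjoint). Hence the combined action factors uniquely as a map $\xi\colon UR\sma_{S}M\to M$. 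I would then check that $\xi$ is associative and unital for the $S$-algebra structure on $UR$: associativity reduces to the case of generators in $\AC(m+1)_{+}\sma R^{(m)}$, where the multiplication of $UR$ was defined via composition $\circ_{m+1}$ at the last slot; this matches the clause of the $R$-module associativity diagram where $j_{1}=\cdots=j_{m}=1$ and the outer $\AC(j_{m+1}+1)$ factor is composed into the last entry. Unit compatibility is the unit diagram for the $R$-module exactly, since the unit of $UR$ is the image of $\ie\in\AC(1)$.

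Conversely, from a left $UR$-module structure $\xi\colon UR\sma_{S}M\to M$ I would pull back along the canonical maps
\[
\AC(m+1)_{+}\sma R^{(m)} \longrightarrow UR
\]
(the summand inclusions into the coequalizer) to obtain maps $\xi_{m}\colon \AC(m+1)_{+}\sma R^{(m)}\sma_{S}M\to M$. The relations imposed in forming the coequalizer exactly encode the $R$-module associativity diagram for these $\xi_{m}$, while $UR$-unitality gives the $R$-module unit diagram. The two constructions are mutually inverse by inspection: starting from $\xi_{m}$'s, forming $\xi$, and restricting recovers the $\xi_{m}$'s because the restriction of the factored map along the generating summands is the original action; starting from $\xi$, extracting $\xi_{m}$'s and reassembling recovers $\xi$ by the uniqueness in the universal property of the coequalizer.

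There is essentially no obstacle here beyond careful bookkeeping; the proposition is a formal consequence of the fact that $UR$ was \emph{defined} by presenting the generators and relations for a left $R$-module action. The one subtlety worth flagging in the write-up is that associativity of the $UR$-action encodes only the slice of the $R$-module associativity diagram where composition occurs at the last slot; to recover the general diagram (with arbitrary $j_{1},\ldots,j_{m+1}$) one uses that the other slot compositions are already absorbed into the equivalence relation defining $UR$ (specifically, into the multiplication $\zeta$ of $R$ appearing in one of the two parallel arrows of the coequalizer). This is the point at which the bookkeeping is most delicate, but the argument is entirely formal.
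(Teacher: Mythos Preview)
Your argument is correct and is precisely the formal verification the paper gestures at: the paper does not give a proof of this proposition at all, stating only that it follows from ``comparing the universal property defining $UR$ with the data defining a left $R$-module'' (with citations to \cite{gk} and \cite{kmbook}). Your write-up is the expanded form of that sentence, and the subtlety you flag --- that $UR$-associativity directly encodes only last-slot composition, with the remaining slots absorbed into the coequalizer relation via $\zeta$ --- is exactly the one point where care is needed.
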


\begin{conv}\label{convUR}
By slight abuse, we use left $R$-modules and left $UR$-modules
interchangeably. We define the category of left $R$-modules $\aM_{R}$
to be the category of left $UR$-modules $\aM_{UR}$.
\end{conv}

The construction of $UR$ above is purely formal, using none of the
specifics of $\AC$; indeed, the analogue of construction makes sense
for an arbitrary non-$\Sigma$ operad, and Proposition~\ref{propenvalg}
holds in full generality.  On the other hand, for the non-$\Sigma$
operad $\AC$, the left module enveloping algebra admits a more
concrete description, which we now produce.

Let $D$ denote the subspace of $\AC(2)$ where the first sub-interval
begins at zero and the second sub-interval begins at the same point
where the first one ends:
\[
D=\left\{ ([x_{1},y_{1}],[x_{2},y_{2}])\in \AC(2) \mid x_{1}=0,
y_{1}=x_{2} \right\}.
\]
Let $\bar D=\AC(1)$; then dropping the
first sub-interval includes $D$ in $\bar D$ as the subspace of
intervals that do not start at $0$.  Let $A=AR$ be the $S$-module defined
by the following pushout diagram.
\[
\xymatrix@-1pc{%
D_{+}\sma S\ar[r]\ar[d]&D_{+}\sma R\ar[d]\\
\bar D_{+}\sma S\ar[r]&A
}
\]
Intuitively, $A$ consists of pairs of sub-intervals $([0,a],[a,b])$, with the
first sub-interval labelled by $R$, union sub-intervals $[0,b]$
labelled by $S$. 

We use $\circ_{2}$ to construct an associative multiplication on $A$
as follows. Given $\cubea =([0,a],[a,b])$ and $\cubec
=([0,c],[c,d])$ in $D$, then $\cubea
\circ_{2}\cubec$ ``plugs'' $\cubec$ into the second sub-interval in
$\cubea$, producing three sub-intervals,
\[
\underbrace{%
\subseg{a}{4em}%
\subseg{(b-a)c}{8em}%
\subseg{(b-a)(d-c)}{6em}%
\subsegnoel{6em}}_{b}
\subsegnobl{2em}\ .
\]
These define a new element of $D$ by taking the first sub-interval to
be the concatenation of the first two sub-intervals above and taking the
second sub-interval to be the remaining (third) sub-interval
above.  In formulas, this is the pair
\[
([0,a+(b-a)c],[a+(b-a)c,a+(b-a)d]),
\]
and pictorially is
\[
\subseg{a+(b-a)c}{12em}%
\subseg{(b-a)(d-c)}{6em}
\subsegnolabel{8em}\ .
\]
This defines a map $p\colon D\times D\to D$.

To explain what happens with the $R\sma_{S}R$ factor of $(D_{+}\sma
R)^{(2)}$, we use the first two sub-intervals in $\cubea
\circ_{2}\cubec$ to specify a map 
$q\colon D\times D\to \AC(2)$.  Let $q$ be the map sending
$(\cubea,\cubec)$ as above to 
\[
([0,a/(a+(b-a)c)],[a/(a+(b-a)c),1]),
\]
the pair obtained by taking just the first two sub-intervals of $\cubea
\circ_{2}\cubec$ and rescaling to length $1$,
\[
\underbrace{%
\subseg{a}{4em}%
\subseg{(b-a)c}{8em}}_{a+(b-a)c}\ .
\]
In other words, $p$ and $q$ decompose the composition
$\circ_{2}$ into two steps, 
\[
\cubea\circ_{2}\cubec =p(\cubea,\cubec)\circ_{1}q(\cubea,\cubec).
\]

Using $p\times q\colon D\times D\to D\times \AC(2)$ and the $\AC$-algebra
structure of $R$, we get a map
\[
(D_{+}\sma R)\sma_{S} (D_{+}\sma R)\iso (D\times D)_{+}\sma R\sma_{S} R\to
D_{+}\sma \AC(2)_{+}\sma R\sma_{S} R\to D_{+}\sma R.
\]
(In words, we multiply the
$D$ factors using $p$ and the $R$ factors according to $q$.)
Easy computations show that this extends to a map $A\sma_{S} A\to A$
that is associative and unital (with unit $S\to A$ induced by $\ie\in
\bar D$), 
making $A$ an associative $S$-algebra.  The construction $A=AR$ is
clearly functorial in $\AC$-algebra maps of $R$, and we get the
following proposition. 

\begin{prop}\label{propconsUR}
The construction $A$ above defines a functor from $\AC$-algebras to
associative $S$-algebras.
\end{prop}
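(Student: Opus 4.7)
My plan is to verify in turn: (i) the proposed multiplication on $D_{+}\sma R$ extends to a map $A\sma_{S}A\to A$; (ii) the resulting multiplication is associative; (iii) it is unital with unit induced by $\ie\in \AC(1)=\bar D$; and (iv) the construction is natural in $\AC$-algebra maps $R\to R'$. Point (iv) is automatic, since $p$, $q$, and the pushout defining $A$ are built from $\AC$ alone, and the only $R$-dependent ingredient is the $\AC$-algebra multiplication $\zeta$, which is preserved by $\AC$-algebra maps. Point (iii) is a direct computation: setting one argument to $\ie$, the formulas for $p$ and $q$ reduce to the identity on the other coordinate and to an element of $\AC(1)$ respectively, and the unit axiom for $R$ as an $\AC$-algebra finishes the check.

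For (i), one has to extend the formulas defined on $(D_{+}\sma R)\sma_{S}(D_{+}\sma R)$ across the three remaining ``mixed'' types of summand in $A\sma_{S}A$ (both factors in $\bar D_{+}\sma S$, or one factor from each piece) in a way compatible with the identification $D_{+}\sma S\to \bar D_{+}\sma S$. The key point is that replacing an $R$-factor by the unit $S\to R$ collapses the $\zeta$-piece to a $\AC(1)$-action consistent with the inclusion $D\hookrightarrow \bar D$, so the four ``pieces'' of formula agree after the gluing.

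The substantive step is (ii). For triples $(t,r),(s,r'),(u,r'')$ supported in $D_{+}\sma R$, both bracketings of the triple product split into a $D$-coordinate and an $R$-valued coordinate. Equality of the $D$-coordinate is associativity of the binary operation $p$, i.e.,
\[
p(p(t,s),u)=p(t,p(s,u)).
\]
Equality of the $R$-coordinate reduces, via the $\AC$-algebra associativity of $R$, to the operadic identity
\[
q(p(t,s),u)\circ_{1}q(t,s)=q(t,p(s,u))\circ_{2}q(s,u)
\]
in $\AC(3)$. Both identities fall out of unfolding the decomposition $\cubea\circ_{2}\cubec=p(\cubea,\cubec)\circ_{1}q(\cubea,\cubec)$ on both sides of the operad associativity relation $(t\circ_{2}s)\circ_{3}u=t\circ_{2}(s\circ_{2}u)$ in $\AC$. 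The main obstacle will be the bookkeeping: the explicit rescaling formulas for $q$ are nontrivial, and one must carefully track how iterated $p$-subdivisions interact with them. Drawing out the three nested pairs of sub-intervals makes the two identities transparent, reducing (ii) to a finite, entirely explicit verification with no homotopical input.
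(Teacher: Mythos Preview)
Your proposal is correct and matches the paper's approach. The paper does not actually prove this proposition: it asserts before the statement that ``easy computations show'' the multiplication extends, is associative and unital, and that functoriality is ``clear,'' and then records the result. Your outline supplies exactly those omitted computations, and your structural explanation of associativity---deducing $p(p(t,s),u)=p(t,p(s,u))$ and $q(p(t,s),u)\circ_{1}q(t,s)=q(t,p(s,u))\circ_{2}q(s,u)$ from the decomposition $\cubea\circ_{2}\cubec=p(\cubea,\cubec)\circ_{1}q(\cubea,\cubec)$ together with operad associativity and the uniqueness of that decomposition for $p(\cubea,\cubec)\in D$---is precisely the reason the ``easy computations'' succeed.
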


The inclusion of $D$ in $\AC(2)$ and the inclusion of $\bar D$ in
$\AC(1)$ induce a natural map $\phi$ of $S$-modules under $S$ from $AR$
to $UR$. 

\begin{thm}\label{thmUR}
The map $\phi \colon AR\to UR$ is a natural isomorphism of associative
$S$-algebras. 
\end{thm}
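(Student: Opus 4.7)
My plan is to construct an explicit two-sided inverse $\psi\colon UR\to AR$ using the coequalizer universal property of $UR$, then verify multiplicativity of $\phi$ separately. Define $\tilde\psi\colon \bigvee_{m}\AC(m+1)_{+}\sma R^{(m)}\to AR$ piece by piece: for $m=0$, use the inclusion $\bar D_{+}\sma S\subset AR$; for $m\geq 1$ and $\alpha=([x_{1},y_{1}],\ldots,[x_{m+1},y_{m+1}])\in \AC(m+1)$, observe that $x_{m+1}>0$, decompose $\alpha=\gamma\circ_{1}\beta$ where $\gamma=([0,x_{m+1}],[x_{m+1},y_{m+1}])\in D$ and $\beta\in \AC(m)$ is the rescaling of the first $m$ sub-intervals by $1/x_{m+1}$, and send $\alpha\sma (r_{1}\sma\cdots\sma r_{m})$ to $\gamma\sma \zeta_{m}(\beta;r_{1},\ldots,r_{m})\in D_{+}\sma R$.

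The main computation is to show that $\tilde\psi$ factors through the coequalizer $UR$. For a relation generated by $\alpha\circ(\beta_{1},\ldots,\beta_{m})\in \AC(j+1)$ (with $\beta_{i}\in \AC(j_{i})$ and $j=j_{1}+\cdots+j_{m}$), I need to check that the same element of $AR$ is produced whether we apply $\tilde\psi$ before or after the multiplication relation. The $\gamma$-component on both sides is determined by the last sub-interval of $\alpha$, which is unchanged by the composition. For the $R$-factor, the $\AC(j)$-operation obtained from decomposing $\alpha\circ(\beta_{1},\ldots,\beta_{m})$ equals $\beta\circ(\beta_{1},\ldots,\beta_{m})$, where $\beta$ is the $\AC(m)$-operation from $\alpha$'s decomposition; associativity of the $\AC$-action on $R$ then yields the required equality $\zeta_{j}(\beta\circ(\beta_{1},\ldots,\beta_{m});r^{(j)})=\zeta_{m}(\beta;\zeta(\beta_{1};r^{j_{1}}),\ldots,\zeta(\beta_{m};r^{j_{m}}))$. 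Once $\psi$ is well-defined, both $\psi\circ\phi=\id_{AR}$ and $\phi\circ\psi=\id_{UR}$ are immediate: the first because $\gamma=\gamma\circ_{1}\ie$ with $\zeta_{1}(\ie;r)=r$, the second because the coequalizer relation identifies $\gamma\sma \zeta_{m}(\beta;r^{(m)})$ with $(\gamma\circ_{1}\beta)\sma r^{(m)}=\alpha\sma r^{(m)}$.

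It remains to verify that $\phi$ respects multiplication. The key observation, already essentially present in the construction of $AR$, is the identity $\cubea\circ_{2}\cubec=p(\cubea,\cubec)\circ_{1}q(\cubea,\cubec)$ in $\AC(3)$. Hence the product in $AR$ of $\cubea\sma r$ and $\cubec\sma r'$, namely $p(\cubea,\cubec)\sma \zeta_{2}(q(\cubea,\cubec);r,r')$, maps under $\phi$ in $UR$ to $(p\circ_{1}q)\sma (r,r')=(\cubea\circ_{2}\cubec)\sma(r,r')$ via the coequalizer relation, and this is precisely $\phi(\cubea\sma r)\cdot_{UR}\phi(\cubec\sma r')$. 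Mixed products involving $\bar D_{+}\sma S$ and the unit condition follow from the same pattern. The main obstacle I expect is the well-definedness verification in the second paragraph: keeping track of the rescalings, the base-point collapses coming from the smash products with $\AC(m+1)_{+}$, and the interplay of operadic associativity in $\AC$ with the $\AC$-action on $R$ requires careful bookkeeping, particularly when $m$ is large and the $\beta_{i}$'s have varying arities.
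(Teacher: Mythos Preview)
Your proposal is correct and follows essentially the same approach as the paper. Your map $\psi$ is the paper's inverse $\epsilon$, built from the same decomposition $\alpha=\gamma\circ_{1}\beta$ (the paper's $f_{m}$), and your well-definedness check is the commutative square the paper verifies; the only cosmetic difference is that you check multiplicativity of $\phi$ directly via $\cubea\circ_{2}\cubec=p(\cubea,\cubec)\circ_{1}q(\cubea,\cubec)$, while the paper instead checks that $\epsilon$ is a map of $S$-algebras (which of course forces its inverse $\phi$ to be one as well).
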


\begin{proof}
For $m>0$, let $f_{m}\colon \AC(m+1)\to D\times
\AC(m)$ denote the map that sends 
$([x_{1},y_{1}],\ldots,[x_{m+1},y_{m+1}])$ of $\AC(m+1)$ to 
\[
([0,x_{m+1}],[x_{m+1},y_{m+1}]),
([x_{1}/x_{m+1},y_{1}/x_{m+1}],\ldots,[x_{m}/x_{m+1},y_{m}/x_{m+1}]),
\]
and let $f_{0}$ be the identity map $\AC(1)=\bar D$.
Then for every $m>0$, $j=j_{1}+\cdots+j_{m}>0$, the following diagram commutes,
\[
\xymatrix{%
\AC(m+1)\times \AC(j_{1})\times \cdots \times \AC(j_{m})
\ar[r]^(.7){\circ}\ar[d]_{f_{m}\times \id}
&\AC(j+1)\ar[d]^{f_{j}}\\
D\times \AC(m)\times \AC(j_{1})\times \cdots \times \AC(j_{m})
\ar[r]_(.7){\id\times \circ}
&D\times \AC(j)
}
\]
and the analogous diagram for $j=0$ commutes.
It follows that the composite 
\[
\AC(m+1)_{+}\sma R^{(m)}\to D_{+}\sma \AC(m)_{+}\sma R^{(m)}\to
D_{+}\sma R
\]
for $m>0$ and the identity map $\AC(1)_{+}\sma S=\bar D_{+}\sma S$
induce a map $\epsilon \colon UR\to A$, and it is easy to see from
the definition of the $S$-algebra structures that $\epsilon$ is a
map of associative $S$-algebras.  The composite $\epsilon \circ \phi$
is the identity on $A$.  Since the composite of $f_{m}$ with
$\circ_{1}$ is the identity on 
$\AC(m+1)$ for $m>0$, the defining map
\[
\bigvee_{m}\AC(m+1)_{+}\sma R^{(m)} \to UR
\]
factors through $\phi\circ \epsilon$, and so $\phi \circ \epsilon$ is
the identity on $UR$. 
\end{proof}

We close this section with the proof of Theorem~\ref{thmACnice}: We
show that the canonical map of $UR$-modules $UR\to R$
induced by the inclusion of the unit $S\to R$ is a homotopy
equivalence of $S$-modules.  In terms of the model $A$ above, we can
identify this as the map $\chi \colon A\to R$ induced by the map
\[
D_{+}\sma R\to \AC(1)_{+}\sma R\to R
\]
that forgets the \emph{second} sub-interval in $D$ and applies the
$\AC$-algebra multiplication map $\AC(1)_{+}\sma R\to R$.  We obtain a
map back  
$\psi \colon R\to A$ as the composite
\[
R\to D_{+}\sma R \to A
\]
induced by the inclusion of $([0,1/2],[1/2,1])$ in $D$.  (This map of
$S$-modules is clearly not a map of $UR$-modules.) We have a
homotopy $H_{t}$ from $\psi \circ \chi$ to the identity on $A$ induced
by the linear homotopy
\[
H_{t}([0,c],[c,d])=([0,1/2+t(c-1/2)],[1/2+t(c-1/2),(1-t)+td])
\]
on $D$ (and $\bar D$); note that $(1-t)+td > 1/2+t(c-1/2)$ since
$(1-t)/2+t(d-c)>0$.  On the other side, we have a homotopy $G_{t}$
from $\chi \circ \psi$ to the identity on $R$ induced by the path
$G_{t}=([0,1/2+t/2])$ in $\AC(1)$ and the $\AC$-algebra
multiplication.  This completes the proof of Theorem~\ref{thmACnice}.

\section{The Interchange Operations}
\label{secmc}\label{secpair}

This section constructs the interchange operations and studies them
from the perspective of the homotopy theory of $R$-modules.
Specifically, we prove Theorems~\ref{thmopcomp} and~\ref{thmhocof},
which let us understand the left derived functors and their
compositions. We begin with the point set construction.
Throughout this section $n$ and the $\LC{n}$-algebra $R$ remain fixed,
but we note that all constructions are functorial in the
$\LC{n}$-algebra $R$ and in the inclusions $\LC{n}\to \LC{n'}$ for
$n'>n$. 

For $\ell,m\geq 0$, let $\rho \colon \AC(\ell)\times \LC{n-1}(m)\to
\LC{n}(\ell m)$ be the map that takes the pair
\[
([a^{i},b^{i}]\mid 1\leq i\leq \ell),
([x^{j}_{1},y^{j}_{1}]\times \cdots \times [x^{j}_{n-1},y^{j}_{n-1}]
\mid 1\leq j\leq m)
\]
to the sequence of sub-cubes of $[0,1]^{n}$,
\[
[a^{i},b^{i}]\times [x^{j}_{1},y^{j},1]\times \cdots \times
[x^{j}_{n-1},y^{j}_{n-1}],
\]
(for $1\leq i\leq \ell$, $1\leq j\leq m$), labelled in lexicographical
order in $(i,j)$.  As an abbreviation of this notation, write
$\rho_{1}\colon \LC{n-1}(m)\to \LC{n}(m)$ for $\rho(\ie,-)$, where $\ie$
denotes the identity element of $\AC(1)$,
\begin{multline*}
\rho_{1}\left(
([x^{j}_{1},y^{j}_{1}]\times \cdots \times [x^{j}_{n-1},y^{j}_{n-1}]
\mid 1\leq j\leq m)\right)\\
=([0,1]\times [x^{j}_{1},y^{j}_{1}]\times \cdots 
\times [x^{j}_{n-1},y^{j}_{n-1}]
\mid 1\leq j\leq m) \in \LC{n}(m).
\end{multline*}
Since for any element $\cubec$ of $\LC{n-1}(m)$, $\rho_{1}(\cubec)$ is an
element of $\LC{n}(m)$, it specifies a map of $S$-modules
$\cim{\rho_{1}(\cubec)}\colon R^{(m)}\to 
R$.  The key fact we need is the following.

\begin{prop}
For any $\cubec$ in $\LC{n-1}(m)$, the map $\cim{\rho_{1}(\cubec)}\colon R^{(m)}\to R$ induced by
$\rho_{1}(\cubec)$ is a map of $\AC$-algebras.
\end{prop}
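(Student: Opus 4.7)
The plan is to verify the defining $\AC$-algebra compatibility square for $\rho_{1}(\cubec)\colon R^{(m)}\to R$: for every $\ell\geq 0$ and $\cubea\in \AC(\ell)$, the two composites $\AC(\ell)_{+}\sma (R^{(m)})^{(\ell)}\iso \AC(\ell)_{+}\sma R^{(\ell m)}\to R$ coming from the $\AC$-actions on $R^{(m)}$ and on $R$ agree. Both composites factor through the $\LC{n}$-action on $R$, so the strategy is to identify which element of $\LC{n}(\ell m)$ each one uses, up to a shuffle permutation of the $\ell m$ factors of $R^{(\ell m)}$. Write $\iota\colon \AC\to \LC{n}$ for the embedding that endows $R$ with its $\AC$-structure ($[x,y]\mapsto [x,y]\times [0,1]^{n-1}$), and recall that the $\AC$-structure on $R^{(m)}$ comes from the diagonal $\AC\to \AC^{m}$ used in~\eqref{eqURdiag}.

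For the route that first applies $\rho_{1}(\cubec)$ pointwise to the $\ell$ copies of $R^{(m)}$ and then the $\iota(\cubea)$-action on $R$, functoriality of the $\LC{n}$-action expresses it as the $\LC{n}$-action by the operadic composite $\iota(\cubea)\circ(\rho_{1}(\cubec),\dots,\rho_{1}(\cubec))\in \LC{n}(\ell m)$. A direct inspection of the sub-cubes shows this equals $\rho(\cubea,\cubec)$ with its natural row-major labelling in $(i,j)$. For the other route, the diagonal $\AC\to \AC^{m}$ shuffles the $\ell m$ factors from row-major to column-major order and then applies $\iota(\cubea)$ to each of the $m$ columns; $\rho_{1}(\cubec)$ then acts on the resulting $R^{(m)}$. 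The corresponding element of $\LC{n}(\ell m)$ is $\rho_{1}(\cubec)\circ(\iota(\cubea),\dots,\iota(\cubea))$, which is again the grid of sub-cubes $[a^{i},b^{i}]\times C^{j}$, but labelled in column-major order. The two elements thus differ by the row-to-column shuffle $\tau\in \Sigma_{\ell m}$, which is precisely compensated by the shuffle built into the column-wise action, and $\Sigma$-equivariance of the $\LC{n}$-action on $R$ closes the comparison. The $\ell=0$ unit axiom reduces to the standard operadic unit identity applied to $\rho_{1}(\cubec)$ and $m$ copies of the unit of $R$.

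The main obstacle is the careful bookkeeping of this shuffle: once the permutation implicit in the diagonal $\AC\to \AC^{m}$ combined with the column-wise application of $\iota(\cubea)$ is confirmed to be exactly the row-to-column transposition $\tau$, the rest of the argument is the evident geometric identity that cartesian product of sub-intervals with $(n-1)$-sub-cubes commutes with operadic composition in $\LC{n}$, which is essentially the content of the pairing $\rho$.
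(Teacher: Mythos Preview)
Your proposal is correct and follows essentially the same approach as the paper: the paper's one-line proof simply asserts that both composites in the $\AC$-algebra compatibility square equal the action of $\rho(\cubea,\cubec)$ on $R^{(\ell m)}$ under the lexicographical identification $(R^{(m)})^{(\ell)}\iso R^{(\ell m)}$, which is exactly the interchange identity you unpack. Your explicit bookkeeping of the row-to-column shuffle and the appeal to $\Sigma$-equivariance is just the content hidden in the paper's phrase ``using the implicit lexicographical order''; once the shuffle is absorbed into that identification, both routes are literally $\rho(\cubea,\cubec)$, so no separate equivariance step is needed.
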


The proof consists of observing that for any $\cubea$ in $\AC(\ell)$,
both composites in the diagram
\[
\xymatrix@C+2pc{%
(R^{(m)})^{(\ell)}\ar[r]^{(\cim{\rho_{1}(\cubec)})^{(\ell)}}
\ar[d]_{\cubea}
&R^{(\ell)}\ar[d]^{\cubea}\\
R^{(m)}\ar[r]_{\cim{\rho_{1}(\cubec)}}&R
}
\]
can be identified as the map $\cim{\rho(\cubea,\cubec)}\colon R^{(\ell
m)}\to R$ under the isomorphism $R^{(\ell m)}\iso (R^{(m)})^{(\ell)}$
using the implicit lexicographical order.

Associated to the map of $\AC$-algebras $\cim{\rho_{1}(\cubec)}\colon
R^{(m)}\to R$, we get a map of enveloping algebras $U(R^{(m)})\to
UR$.  Concretely, in terms of the models $A$ of the previous section,
this is induced by the map
\[
D_{+}\sma R^{(m)}\to D_{+}\sma R
\]
that performs $\cim{\rho_{1}(\cubec)}$ on the $R$ factors and the identity
on $D$.   More generally, for any space $X$ and map $f\colon X\to
\LC{n-1}(m)$, we get a 
family of maps of $S$-algebras $U(R^{(m)})\to UR$, which by neglect of
structure gives a family of $(UR,U(R^{m}))$-bimodule structures on
$UR$, or equivalently, a $(UR,U(R^{m}))$-bimodule structure on $UR \sma
X_{+}$.  Concretely, the right $U(R^{(m)})$-action map is induced by
the map
\[
(D_{+}\sma R)\sma X_{+} \sma_{S} (D_{+}\sma R^{(m)})\to (D_{+}\sma
R)\sma_{S} (D_{+}\sma_{S}R)\to D_{+}\sma R,
\]
which is the composite of the map $X_{+}\sma R^{(m)}\to R$ induced by
$\rho_{1}(f)$ and the multiplication on $D_{+}\sma R$.

\begin{notn}\label{notnURf}
For $f\colon X\to \LC{n-1}(m)$, write $URf$ for $UR\sma X_{+}$ with
the\break $(UR, U(R^{(m)}))$-bimodule structure above.
\end{notn}

We have a canonical map of $S$-algebras $U(R^{(m)})\to (UR)^{(m)}$,
which formally is induced by the identification of $(UR)^{(m)}$ as the
left module enveloping algebra of $R^{(m)}$ as an $\AC^{m}$-algebra.
More concretely, it is induced by the map
\[
D_{+}\sma R^{(m)}\to (D_{+}\sma R)^{(m)},
\]
which performs the diagonal map on $D$.  We use this to regard the
smash product over $S$ of left $UR$-modules
\[
M_{1}\sma_{S}\cdots \sma_{S}M_{m}
\]
as a left $U(R^{(m)})$-module.  In this case, the left
$U(R^{(m)})$-module structure on $M_{1}\sma_{S}\cdots 
\sma_{S}M_{m}$ is induced by the diagonal map on $\AC$ and the left
$R$-module structure maps on the $M_{i}$:
\begin{multline*}
\AC(j+1)_{+}\sma (R^{(m)})^{(j)} \sma_{S} M_{1}\sma_{S}\cdots
\sma_{S}M_{m}\\
\shoveleft
\qquad \to \AC(j+1)^{m}_{+} \sma (R^{(m)})^{(j)} \sma_{S} M_{1}\sma_{S}\cdots
\sma_{S}M_{m}\\
\iso (\AC(j+1)_{+}\sma R^{(j)} \sma_{S}M_{1})\sma_{S}\cdots\sma_{S}
(\AC(j+1)_{+}\sma R^{(j)} \sma_{S}M_{m})\\
\to M_{1}\sma_{S}\cdots \sma_{S}M_{m}.
\end{multline*}

\begin{cons}\label{consopn}
For $f\colon X\to \LC{n-1}$, let $\opn{f}$ be the functor
$(\aM_{R})^{m}\to \aM_{R}$ defined by
\[
\opn{f}(M_{1},\ldots,M_{m})=URf\sma_{U(R^{(m)})}
(M_{1}\sma_{S}\cdots \sma_{S}M_{m}).
\]
\end{cons}

Having constructed the point-set operations, we now study them from
the perspective of the homotopy theory of $R$-modules.
Following Convention~\ref{convUR}, we understand homotopical concepts
in $R$-modules in terms of $UR$-modules.
Here we begin to take advantage of the technical properties of EKMM
$S$-modules: Because weak equivalences between cofibrant $R$-modules
are homotopy equivalences, and because topologically enriched functors
preserve homotopies, left derived functors of topologically enriched functors
always exist and are formed by applying the point-set functor to a
cofibrant approximation.  Equivalently, and more conveniently for us,
we can work in terms of $R$-modules that are homotopy equivalent to
cofibrant $R$-modules.  We use the following terminology.

\begin{defn}
A \term{homotopy cofibrant} $R$-module is an $R$-module that is
homotopy equivalent to a cofibrant $R$-module, or equivalently
\cite[VII.4.15]{ekmm}, homotopy equivalent to a cell $R$-module
\cite[III\S2]{ekmm}. 
\end{defn}

The $E_{n}$ interchange operations $\opn{f}$ are topologically
enriched, and in fact are enriched over $S$-modules as functors of
several variables.  Thus, their left derived functors exist and are
calculated by homotopy cofibrant approximation.  In fact, the
$S$-module enriched left derived functors \cite[\S5]{lm2} exist.

\begin{prop}\label{propderived}
For any $f\colon X\to \LC{n-1}(m)$, the left derived functor of
$\opn{f}$ exists, is computed by approximating by a weakly equivalent
homotopy cofibrant object, and
is enriched over the stable category.
\end{prop}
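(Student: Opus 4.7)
The plan is to reduce the statement to the general machinery of $S$-module enriched left derived functors cited from \cite[\S5]{lm2}, after verifying that $\opn{f}$ has the necessary enrichment. First I would unpack Construction~\ref{consopn}: the functor $\opn{f}$ is the composition of three operations, namely the iterated smash product over $S$
\[
(M_{1},\ldots,M_{m})\mapsto M_{1}\sma_{S}\cdots \sma_{S}M_{m},
\]
pullback of $(UR)^{(m)}$-modules to $U(R^{(m)})$-modules along the map of $S$-algebras constructed before Notation~\ref{notnURf}, and tensoring on the left with the $(UR,U(R^{(m)}))$-bimodule $URf$. Each of these is $S$-module enriched in each variable: the smash product over $S$ is the symmetric monoidal product on $S$-modules, pullback along a map of $S$-algebras is trivially enriched, and $URf\sma_{U(R^{(m)})}(-)$ is enriched because $URf$ is cofibrant as a right $U(R^{(m)})$-module up to the standard formal considerations (being a smash product of $UR$ with a space). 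In particular $\opn{f}$ is topologically enriched as a functor of several variables.

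Next I would invoke the EKMM framework. The category $\aM_{UR}$ of $UR$-modules has the model structure of \cite[VII]{ekmm}; all objects are fibrant and, crucially, weak equivalences between cofibrant $UR$-modules are homotopy equivalences. Because $\opn{f}$ is topologically enriched, it preserves homotopies in each variable. Hence applying $\opn{f}$ to any termwise cofibrant approximation $QM_{i}\to M_{i}$ yields a value independent, up to homotopy equivalence, of the choice of approximation, which by definition means the left derived functor $\bL \opn{f}$ exists and is computed by cofibrant replacement. Since $\opn{f}$ even preserves homotopy equivalences of $R$-modules (again by topological enrichment), replacing each $M_{i}$ by any weakly equivalent homotopy cofibrant object gives a model homotopy equivalent to $\opn{f}(QM_{1},\ldots,QM_{m})$, proving the second clause.

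For the enrichment over the stable category, I would use the $S$-module enriched refinement of the argument: the mapping $S$-modules $F_{UR}(QM_{i},N_{i})$ assemble into an $S$-module enriched structure on $\opn{f}$, which descends through cofibrant approximation to an $S$-module enrichment of $\bL \opn{f}$ on homotopy categories, as in \cite[\S5]{lm2}. Passing to homotopy classes then gives the stable category enrichment.

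The main technical point, and the only place one must be careful, is checking that $URf\sma_{U(R^{(m)})}(-)$ behaves well enough with respect to cofibrancy. This is handled by the usual EKMM observation that for any space $X$ and any $S$-algebra map $U(R^{(m)})\to UR$, the $(UR,U(R^{(m)}))$-bimodule $UR\sma X_{+}$ yields an enriched functor that preserves homotopy equivalences, so the derivation is entirely on the input $M_{1}\sma_{S}\cdots \sma_{S}M_{m}$ — and the external smash product preserves cofibrancy and homotopy equivalences in each variable. Everything else is formal from the enriched model structure.
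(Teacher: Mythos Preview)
Your proposal is correct and follows essentially the same approach as the paper, which treats the proposition as an immediate consequence of the topological (indeed $S$-module) enrichment of $\opn{f}$ combined with the EKMM property that weak equivalences between cofibrant objects are homotopy equivalences, citing \cite[\S5]{lm2} for the enriched derived functor. Your decomposition of $\opn{f}$ into three enriched pieces is a reasonable way to verify the enrichment the paper simply asserts; note that your aside about right-cofibrancy of $URf$ is unnecessary, since topological enrichment alone already guarantees preservation of homotopy equivalences.
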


The main tool we have to study the homotopy theory of the $E_{n}$
interchange operations is the following lemma proved at the end of the
section.

\begin{lem}\label{lemcompu}
The canonical map $U(R^{(m)})\to (UR)^{(m)}$ is a homotopy equivalence
of left $U(R^{(m)})$-modules.
\end{lem}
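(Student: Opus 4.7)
The plan is to establish the lemma in two stages: first show that $\Delta \colon U(R^{(m)}) \to (UR)^{(m)}$ is a homotopy equivalence of underlying $S$-modules using Theorem~\ref{thmACnice}, then promote it to a homotopy equivalence of $U(R^{(m)})$-modules.

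For the first stage, I would apply Theorem~\ref{thmACnice} twice. Applied to the $\AC$-algebra $R$, it gives that the unit map $UR \to R$ is a homotopy equivalence of $S$-modules; since $\sma_{S}$ of EKMM $S$-modules is a continuous bifunctor and hence preserves homotopies, the $m$-fold smash $(UR)^{(m)} \to R^{(m)}$ is also a homotopy equivalence of $S$-modules. On the other hand, $R^{(m)}$ carries a natural $\AC^{m}$-algebra structure from the $\AC$-algebra structures on its tensor factors, and hence an $\AC$-algebra structure via the diagonal $\AC \to \AC^{m}$ used in~\eqref{eqURdiag}; applying Theorem~\ref{thmACnice} to this $\AC$-algebra yields that $U(R^{(m)}) \to R^{(m)}$ is a homotopy equivalence of $S$-modules. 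Since $\Delta$ is a map of $S$-algebras compatible with the units, the composite $U(R^{(m)}) \xrightarrow{\Delta} (UR)^{(m)} \to R^{(m)}$ coincides with $U(R^{(m)}) \to R^{(m)}$, and the 2-out-of-3 property for homotopy equivalences of $S$-modules forces $\Delta$ itself to be one.

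The second stage, upgrading to a $U(R^{(m)})$-linear homotopy equivalence, will be the main obstacle. The obvious $S$-module inverse $\psi' \circ \chi^{(m)}$ assembled from the maps in the proof of Theorem~\ref{thmACnice} satisfies $\Delta \circ \psi' = \psi^{(m)}$ and $\chi^{(m)} \circ \Delta = \chi'$, so it is a two-sided $S$-module homotopy inverse to $\Delta$; however, it is not a map of $U(R^{(m)})$-modules since $\psi$ and $\psi'$ are not module maps. My plan is to show that both $U(R^{(m)})$ and $(UR)^{(m)}$ are homotopy cofibrant as $U(R^{(m)})$-modules, so that the $S$-module equivalence of stage one is automatically a $U(R^{(m)})$-module homotopy equivalence (using that a weak equivalence between homotopy cofibrant modules in EKMM is a homotopy equivalence). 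The former is immediate since $U(R^{(m)})$ is the free module on a point. The latter I expect to prove using the concrete pushout description from Theorem~\ref{thmUR}, exhibiting $(AR)^{(m)}$ as built from $A(R^{(m)})$ by a filtration along the contractible diagonals $D \to D^{m}$ and $\bar D \to \bar D^{m}$. The principal difficulty will be that the $U(R^{(m)})$-action uses the nonlinear composition maps $p$ and $q$ on $D$ introduced in Section~\ref{seclmea}, so the cellular construction must respect these operations.
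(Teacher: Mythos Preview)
Your Stage~1 is correct and is a pleasant indirect way to see that $\Delta$ is an $S$-module homotopy equivalence; the paper does not argue this way, since its direct construction yields the $S$-module equivalence as a byproduct.

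Stage~2 is where the content lies, and here your plan has a genuine gap. You propose to deduce $U(R^{(m)})$-linearity of the equivalence from homotopy cofibrancy of both sides. But showing that $(UR)^{(m)}$ is homotopy cofibrant as a $U(R^{(m)})$-module is essentially the lemma itself: the only plausible cofibrant module to compare it to is $U(R^{(m)})$ (or its cell replacement), and exhibiting a $U(R^{(m)})$-linear homotopy equivalence to that \emph{is} the assertion. Your proposed ``filtration along the contractible diagonals'' does not escape this: to show the filtration stages are $U(R^{(m)})$-module cofibrations with controlled quotients, you must understand exactly how the diagonal left action via $p$ and $q$ interacts with the decomposition of $D^{m}$ --- the very ``principal difficulty'' you flag. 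Note also that Proposition~\ref{propcompcof} is a \emph{consequence} of the lemma, so you cannot invoke it here. (A smaller point: in EKMM, $U(R^{(m)})$ is not literally a cell module over itself --- the free cell is $U(R^{(m)})\sma_{S}S_{S}$ --- so ``free on a point'' needs care, though this is not the main issue.)

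The paper's approach confronts the difficulty head-on rather than routing around it. It writes down an explicit retraction $g\colon D^{m}\to D$, sending $(([0,c_{i}],[c_{i},d_{i}]))_{i}$ to $([0,c],[c,d])$ with $c=\max c_{i}$ and $d-c=\min(d_{i}-c_{i})$, together with the straight-line homotopy $h_{t}$ from $\Delta\circ g$ to $\id$. The crucial step is a direct computation showing that for fixed $\cubea\in D$ acting diagonally, both $p(\cubea,-)$ and $q(\cubea,-)$ commute with $h_{t}$ coordinatewise; this is precisely the statement that the induced maps $g\colon (UR)^{(m)}\to U(R^{(m)})$ and $h_{t}\colon (UR)^{(m)}\to (UR)^{(m)}$ respect the left $U(R^{(m)})$-action. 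Since $g\circ\Delta=\id$ on the nose, this gives the $U(R^{(m)})$-linear homotopy equivalence directly. In short: rather than proving an abstract cofibrancy statement, do the concrete calculation with $p$ and $q$ that you correctly identified as the obstacle.
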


For a cell $(UR)^{(m)}$-module $M$, applying the previous lemma
inductively, we see that $M$ is homotopy cofibrant as a
$U(R^{(m)})$-module.  This implies the following proposition.

\begin{prop}\label{propcompcof}
Let $M$ be a $(UR)^{(m)}$-module.  If $M$ is homotopy cofibrant as a
$(UR)^{(m)}$-module, then it is homotopy cofibrant as a
$U(R^{(m)})$-module. 
\end{prop}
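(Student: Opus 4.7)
The plan is to reduce to the case of a cell $(UR)^{(m)}$-module and then build up the $U(R^{(m)})$-module cell structure inductively, one cell at a time, using Lemma~\ref{lemcompu} at each stage.

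First, since homotopy equivalences of $(UR)^{(m)}$-modules restrict (along the $S$-algebra map $U(R^{(m)})\to (UR)^{(m)}$) to homotopy equivalences of $U(R^{(m)})$-modules, and since being homotopy cofibrant means being homotopy equivalent to a cell module, I may assume $M$ is itself a cell $(UR)^{(m)}$-module, built as a (possibly transfinite) sequential colimit $M=\colim M_{\alpha}$ with $M_{0}=*$ and each $M_{\alpha+1}$ a pushout of $(UR)^{(m)}$-modules
\[
M_{\alpha+1}=M_{\alpha}\cup_{(UR)^{(m)}\sma X_{\alpha}}((UR)^{(m)}\sma CX_{\alpha}),
\]
where $X_{\alpha}$ runs over the appropriate sphere $S$-modules.

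The second step is the inductive cell attachment argument. By Lemma~\ref{lemcompu}, the free $(UR)^{(m)}$-module on any based $S$-module $X$ is homotopy equivalent, as a $U(R^{(m)})$-module, to the free $U(R^{(m)})$-module on $X$, because smashing with $X$ preserves homotopy equivalences of $U(R^{(m)})$-modules. Inductively assume $M_{\alpha}$ is homotopy equivalent as a $U(R^{(m)})$-module to a cell $U(R^{(m)})$-module $N_{\alpha}$. The inclusion $(UR)^{(m)}\sma X_{\alpha}\to (UR)^{(m)}\sma CX_{\alpha}$ is an h-cofibration of $(UR)^{(m)}$-modules, hence also an h-cofibration of $U(R^{(m)})$-modules since restriction of structure preserves h-cofibrations. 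Combining the equivalence $M_{\alpha}\simeq N_{\alpha}$ with the equivalence $(UR)^{(m)}\sma X_{\alpha}\simeq U(R^{(m)})\sma X_{\alpha}$ and choosing a homotopic representative of the attaching map factoring through $U(R^{(m)})\sma X_{\alpha}\to N_{\alpha}$, the gluing lemma for pushouts along h-cofibrations produces a cell $U(R^{(m)})$-module $N_{\alpha+1}$ homotopy equivalent to $M_{\alpha+1}$.

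For limit stages and transfinite ordinals, I would use that sequential colimits of h-cofibrations preserve homotopy equivalences (a standard fact in EKMM, since all objects are fibrant and the relevant cylinder constructions commute with the colimit), so the colimit $M=\colim M_{\alpha}$ is homotopy equivalent as a $U(R^{(m)})$-module to the cell $U(R^{(m)})$-module $N=\colim N_{\alpha}$.

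The main obstacle is bookkeeping at the inductive step: rectifying the attaching map $(UR)^{(m)}\sma X_{\alpha}\to M_{\alpha}$ into a compatible $U(R^{(m)})$-module map $U(R^{(m)})\sma X_{\alpha}\to N_{\alpha}$ that realizes the new cell and then applying the gluing lemma to recover $M_{\alpha+1}\simeq N_{\alpha+1}$. This is essentially a formal diagram chase given Lemma~\ref{lemcompu} and the standard homotopical properties of EKMM module categories, but the transfinite version requires some care to ensure the chosen equivalences can be assembled coherently through the filtration.
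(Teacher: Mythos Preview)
Your proposal is correct and follows essentially the same approach as the paper: reduce to a cell $(UR)^{(m)}$-module and then argue inductively over the cell filtration using Lemma~\ref{lemcompu}. The paper compresses the entire argument into a single sentence (``applying the previous lemma inductively''), whereas you have written out the standard gluing-lemma bookkeeping that makes the induction go through.
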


We can now prove Theorems~\ref{thmopcomp} and~\ref{thmhocof}.

\begin{proof}[Proof of Theorem~\ref{thmopcomp}]
Write $M$ for the left $(UR)^{(j)}$-module $M_{1}\sma_{S}\cdots
\sma_{S}M_{j}$ in the statement.  The map in question is induced by
applying $(-)\sma_{(UR)^{(j)}}M$ to 
the map of $(UR,(UR)^{(j)})$-bimodules
\begin{multline*}
URf\sma_{U(R^{(m)})}(URg_{1}\sma_{S}\cdots \sma_{S}URg_{m})
\sma_{U(R^{(j_{1})})\sma_{S}\cdots \sma_{S}U(R^{(j_{m})})}
(UR)^{(j)}\\
\to
UR(f\circ (g_{1},\ldots,g_{m}))\sma_{U(R^{(j)})}(UR)^{(j)}.
\end{multline*}
Since by hypothesis, $M$ is a cofibrant left $(UR)^{(j)}$-module, it
suffices to show that the map above is a weak equivalence.  By
Lemma~\ref{lemcompu}, it suffices to show that the map
\[
URf\sma_{U(R^{(m)})}(URg_{1}\sma_{S}\cdots \sma_{S}URg)
\to
UR(f\circ (g_{1},\ldots,g_{m}))
\]
is a weak equivalence.  Since this is the map
\[
UR \sma X_{+} \sma_{U(R^{(m)})} 
(UR)^{(m)} \sma (Y_{1}\times \cdots \times Y_{m})_{+}
\to
UR \sma (X\times Y_{1}\times \cdots \times Y_{m})_{+},
\]
we see that it is a weak equivalence by applying
Lemma~\ref{lemcompu} a second time.
\end{proof}

\begin{proof}[Proof of Theorem~\ref{thmhocof}]
Applying Proposition~\ref{propcompcof}, we can choose a cell
$U(R^{(m)})$-module $M$ homotopy equivalent to $M_{1}\sma_{S}\cdots
\sma_{S}M_{m}$, and then it suffices to show that
$URf\sma_{U(R^{(m)})}M$ is homotopy cofibrant.  Working inductively
with the cell structure,
it suffices to check the case when $M$ is a single cell
$U(R^{(m)})\sma_{S}S^{n}_{S}$ (in the notation of \cite[II.1.7]{ekmm}).
In this case,
\[
URf\sma_{U(R^{(m)})}(U(R^{(m)}) \sma_{S}S^{n}_{S})
\iso URf \sma S^{n}_{S}=(UR\sma X_{+})\sma_{S}S^{n}_{S}
\iso UR \sma_{S} S^{n}_{S}\sma X_{+}
\]
is homotopy cofibrant.
\end{proof}

We close this section with the proof of Lemma~\ref{lemcompu}.  The
proof requires the construction of $U(R^{(m)})$ as $A(R^{(m)})$ in 
Section~\ref{seclmea}.  We begin by describing maps and homotopies on
$D$.  

Write $\Delta$ for the diagonal map $D\to D^{m}$ and consider the map
$g\colon D^{m}\to D$ defined by 
\[
g\colon (\cubec_{1},\ldots,\cubec_{m})=
(([0,c_{1}],[c_{1},d_{1}]),\ldots,([0,c_{m}],[c_{m},d_{m}]))
\mapsto ([0,c],[c,d])
\]
where $c=\max\{c_{1},\ldots,c_{m}\}$ and
$d-c=\min\{d_{1}-c_{1},\ldots,d_{m}-c_{m}\}$.  We have that $g\circ
\Delta$ is the identity on $D$.  We obtain a homotopy $h_{t}$ on
$D^{m}$ from $\Delta \circ g$ to the identity defined
by the linear homotopy in each coordinate
\[
h_{t}(\cubec_{1},\ldots,\cubec_{m})_{i}=([0,c+t(c_{i}-c)],[c+t(c_{i}-c),d+t(d_{i}-d)]).
\]
Note that $d+t(d_{i}-d)>c+t(c_{i}-c)$ since $(1-t)(d-c)+t(d_{i}-c_{i})>0$.
Analogous formulas define maps and
homotopies when one or more factors of $D$ are replaced by $\bar D$
(with the analogue of $g$ landing in $\bar D$ when all factors are
$\bar D$).

Next we see how the homotopies interact with the maps $p$ and $q$ in
the construction of $A$.  For $\cubea=([0,a],[a,b])$ in $D$, we have
\[
p(\cubea,h_{t}(\cubec_{1},\ldots,\cubec_{m})_{i})=([0,x],[x,y]),
\]
where 
\begin{multline*}
x=a+(b-a)(c+t(c_{i}-c))=a+(b-a)c+t(b-a)(c_{i}-c)\\
=a+(b-a)c + t\bigl( (a+(b-a)c_{i})-(a+(b-a)c) \bigr)
\end{multline*}
and 
\begin{multline*}
y=a+(b-a)(d+t(d_{i}-d))=a+(b-a)d+t(b-a)(d_{i}-d)\\
=a+(b-a)d + t\bigl( (a+(b-a)d_{i})-(a+(b-a)d) \bigr).
\end{multline*}
Since $\max\{a+(b-a)c_{i}\}$ is $a+(b-a)c$ and
$\min\{(a+(b-a)d_{i})-(a+(b-a)c_{i})\}$ is $(b-a)(d-c)$, we see that 
\[
p(\cubea,h_{t}(\cubec_{1},\ldots,\cubec_{m})_{i})
=h_{t}(p(\cubea,\cubec_{1}),\ldots,p(\cubea,\cubec_{m}))_{i}.
\]
Likewise, since
\[
\frac a{(a+(b-a)(c+t(c_{i}-c)))}=
\frac a{\left(a+(b-a)c+t\bigl( (a+(b-a)c_{i})-(a+(b-a)c) \bigr)\right)},
\]
we have
\[
q(\cubea,h_{t}(\cubec_{1},\ldots,\cubec_{m})_{i})=
h_{t}(q(\cubea,\cubec_{1}),\ldots,q(\cubea,\cubec_{m}))_{i}.
\]

Putting this together with $R^{(m)}$, we get a map
\[
g\colon D^{m}_{+}\sma R^{(m)}\to D_{+}\sma R^{(m)}
\]
and a homotopy
\[
h_{t}\colon D^{m}_{+}\sma R^{(m)}\to D^{m}_{+}\sma R^{(m)}.
\]
The formulas above imply that these are compatible with the left
action of $D_{+}\sma R^{(m)}$.  Moreover, these are compatible with
the analogous maps obtained by replacing one or more factors of $D$ by
$\bar D$ and the corresponding factor of $R$ with $S$.  Passing to
iterated pushouts, we get a map
\[
g\colon (UR)^{(m)}\to U(R^{(m)})
\]
and a homotopy 
\[
h_{t}\colon (UR)^{(m)}\to (UR)^{(m)}
\]
compatible with the left $U(R^{(m)})$-action.

\section{Proof of the Main Theorem}
\label{secsma}

In this section, we prove the Main Theorem, which amounts to
specifying constructions and verifying coherence diagrams.  For the case
of an $E_{2}$ algebra, we construct the smash product
in~\ref{secconssma}, the right adjoint function modules
in~\ref{secconsfun}, and compare the categories of left and right
modules in~\ref{secright}.  We construct the unit and associativity
isomorphisms in~\ref{secconsunit} and~\ref{secconsass},
and prove the unit and associativity 
coherence in~\ref{seccoher}.  For the $E_{3}$ case, we construct the
braid isomorphism and prove its coherence in~\ref{secbraid}, and for
the $E_{4}$ case, we show that the braid isomorphism is a symmetry
isomorphism in~\ref{secsym}. 

\subsection{The Smash Product}
\label{secconssma}

Let $\mu$ be the element $([0,1/2],[1/2,1])$ in $\LC{1}(2)$.  For left
$R$-modules $M$,$N$, define 
\[
M\sma_{R}N = \opn{\mu}(M,N).
\]
Let $\sma_{R}\colon \aD_{R}\times \aD_{R}\to \aD_{R}$ be the left
derived functor.

\subsection{The Function Modules}
\label{secconsfun}

For a left $R$-module $M$, let
\[
M^{\ell}=UR\mu\sma_{U(R^{(2)})}(UR\sma_{S}M)
\qquad \text{and}\qquad
M^{r}=UR\mu\sma_{U(R^{(2)})}(M\sma_{S}UR)
\]
in the notation of~\ref{notnURf}.
These are $(UR,UR)$-bimodules using the left $UR$-module structure on
$UR\mu$ and the right $UR$-module structure on $UR$.  For example,
\begin{align*}
UR \sma_{S} M^{\ell} \sma_{S} UR
&=UR\sma_{S}(UR\mu\sma_{U(R^{(2)})}(UR\sma_{S}M))\sma_{S}UR\\
&\iso (UR \sma_{S} UR\mu)\sma_{U(R^{(2)})}((UR\sma_{S}UR)\sma_{S}M)\\
&\to UR\mu\sma_{U(R^{(2)})}(UR\sma_{S}M) = M^{\ell}.
\end{align*}
Clearly, the
functors 
\[
F_{UR}(M^{\ell},-)\qquad \text{and}\qquad F_{UR}(M^{r},-)
\]
are right adjoint to the point-set functors $(-)\sma_{R}M$ and
$M\sma_{R}(-)$ defined above.  Now assume $M$ is homotopy
cofibrant. Then $M^{\ell}$ and $M^{r}$ are homotopy cofibrant as left
$UR$-modules, and so these functors preserve weak equivalences between
arbitrary $UR$-modules; therefore, their right derived functors
exist.  Since for any homotopy cofibrant $N$, $N\sma_{R}M$ and
$M\sma_{R}N$ are homotopy cofibrant, an easy check shows that the
right derived functors of $F_{UR}(M^{\ell},-)$ and $F_{UR}(M^{r},-)$
remain adjoint to the left derived functors of $(-)\sma_{R}M$ and
$M\sma_{R}(-)$.

\subsection{Comparison of Left and Right Modules}
\label{secright}

Forgetting the left $UR$-module structure on $M^{r}$ defines a functor
$r$ from $\aM_{UR}$ to $\aM_{UR^{\op}}$, and a derived functor from
$\aD_{R}$ to $\aD_{R^{\op}}$.  By construction, (the underlying
$S$-module of) the smash product
above is the composite of $r$ with the balanced product of a left and
right $UR$-module
\[
\opn{\mu}(M,N)=rM \sma_{UR} N.
\]
To see that $r$ induces an equivalence on derived categories, we can
rewrite $r$ as
\[
rM=(UR\mu \sma_{U(R^{(2)})}(UR\sma_{S}UR)) \sma_{UR} M.
\]
Writing $W$ for $UR\mu \sma_{U(R^{(2)})}(UR\sma_{S}UR)$,
we can identify the derived functor as 
\[
r(-)=\Tor_{UR}({}_{UR^{\op}}W,-)
\]
in the notation of \cite{lm2}.  Applying \cite[8.5]{lm2}, we see that
the right adjoint\break $\Ext_{UR^{\op}}(W_{UR},-)$ exists. Since $W$ is weakly
equivalent to $UR$ in each of its right $UR$-module structures, both
derived functors are naturally isomorphic to the identity on the
underlying $S$-modules.  In particular, it follows that the unit and
counit of the derived adjunction are isomorphisms and these functors
are inverse equivalences.

\subsection{The Unit Isomorphisms}
\label{secconsunit}

Since $UR\to R$ is a weak equivalence, we have $UR\sma_{S} S_{S}\to R$
as a cofibrant approximation.  Let 
\[
\mu_{0}^{\ell}=([1/2,1])\qquad \text{and}\qquad \mu_{0}^{r}=([0,1/2]),
\]
elements of $\LC{1}(1)$.  The maps of $\AC$-algebras 
\[
i_{1}\colon R=S\sma_{S}R\to R\sma_{S}R\qquad \text{and}\qquad
i_{2}\colon R=R\sma_{S}S\to R\sma_{S}R
\]
allow us to regard $UR\mu$ as a $(UR,UR)$-bimodule two different ways,
and we have canonical isomorphisms of left $UR$-modules
\[
UR\mu\sma_{UR,i_{1}}M\iso UR\mu_{0}^{\ell}\sma_{UR}M
\qquad \text{and}\qquad 
UR\mu\sma_{UR,i_{2}}M\iso UR\mu_{0}^{r}\sma_{UR}M.
\]
Letting $\eta^{\ell}$ and $\eta^{r}$
denote the linear paths in $\LC{1}$ from $\mu_{0}^{\ell}$ and $\mu_{0}^{r}$ to
$\ie=([0,1])$, we then have natural maps
\begin{gather*}
\opn{\mu}(UR\sma S_{S},M)
\from \opn{\mu_{0}^{\ell}}(M)\sma_{S}S_{S}\to \opn{\eta^{\ell}}(M)
\from \opn{\{\ie\}}(M)=M\\
\opn{\mu}(M,UR\sma S_{S})
\from \opn{\mu_{0}^{r}}(M)\sma_{S}S_{S}\to \opn{\eta^{r}}(M)
\from \opn{\{\ie\}}(M)=M,
\end{gather*}
in which all maps are weak equivalences when $M$ is homotopy
cofibrant.  These are the left 
and right unit isomorphisms $\lambda$ and $\rho$. 

\subsection{The Associativity Isomorphism}
\label{secconsass}
As indicated in Section~\ref{secoutline}, for an appropriate path
$\alpha$ in $\LC{1}(3)$, the associativity
isomorphism is the zigzag~\eqref{eqconsass},
\[
\xymatrix@-1pc{%
\opn{\mu\circ_{1}\mu}(L,M,N)\ar[d]\ar[r]
&\opn{\alpha}(L,M,N)
&\opn{\mu\circ_{2}\mu}(L,M,N)\ar[l]\ar[d]\\
\opn{\mu}(\opn{\mu}(L,M),N)
&&\opn{\mu}(L,\opn{\mu}(M,N)),
}
\]
in which all the maps are weak equivalences when $L$, $M$, and $N$ are
homotopy cofibrant.  

\subsection{The Coherence Diagrams}
\label{seccoher}

For coherence of associativity, we want to show that the pentagon
diagram in $\aD_{R}$ 
\[
\xymatrix@C-4pc{%
&(K\sma_{R}L)\sma_{R}(M\sma_{R}N)\ar[dr]\\
((K\sma_{R}L)\sma_{R}M)\sma_{R}N\ar[ur]\ar[d]&&
K\sma_{R}(L\sma_{R}(M\sma_{R}N))\\
(K\sma_{R}(L\sma_{R}M))\sma_{R}N\ar[rr]
&&K\sma_{R}((L\sma_{R}M)\sma_{R}N)\ar[u]
}
\]
commutes.  The paths
\[
\alpha \circ_{1}\mu,\qquad
\mu\circ_{1}\alpha,\qquad 
\alpha \circ_{2}\mu,\qquad
\mu\circ_{2}\alpha,\qquad
\alpha \circ_{3}\mu
\]
specify a map from the boundary of the pentagon into $\LC{1}(4)$,
which can be filled in to a map $\pi$ from the pentagon into
$\LC{1}(4)$ by the contractibility of the components of $\LC{1}(4)$ (or by
making explicit choices).  We then have the following commutative
diagram of weak equivalences of $(UR,U(R^{4}))$-bimodules.
\[
\xymatrix@C-1pc{%
&UR(\mu \circ (\mu ,\mu))\ar[dl]\ar[dr]\ar[dd]\\
UR(\alpha\circ_{1}\mu)\ar[dr]&&UR(\alpha\circ_{3}\mu)\ar[dl]\\
UR(\mu\circ_{1}(\mu\circ_{1}\mu))\ar[u]\ar[d]\ar[r]
&UR\pi&UR(\mu\circ_{2}(\mu\circ_{2}\mu))\ar[u]\ar[d]\ar[l]\\
UR(\mu \circ_{1}\alpha)\ar[ur]&&UR(\mu\circ_{2}\alpha)\ar[ul]\\
UR(\mu \circ_{1}(\mu \circ_{2}\mu))\ar[u]\ar[r]\ar[uur]
&UR(\alpha \circ_{2}\mu)\ar[uu]
&UR(\mu \circ_{2}(\mu \circ_{1}\mu))\ar[l]\ar[u]\ar[uul]
}
\]
We see that both composites of derived functors
\[
\opn{\mu}\circ_{1}(\opn{\mu}\circ_{1}\opn{\mu}) \to 
\opn{\mu}\circ_{2}(\opn{\mu}\circ_{2}\opn{\mu})
\]
in $\aD_{R}$ are represented by the zigzag
\[
\opn{\mu}\circ_{1} (\opn{\mu}\circ_{1}\opn{\mu})
\from \opn{\mu\circ_{1}(\mu\circ_{1}\mu)}
\to \opn{\pi}\from \opn{\mu \circ_{2}(\mu \circ_{2}\mu)}
\to \opn{\mu}\circ_{2}(\opn{\mu}\circ_{2}\opn{\mu}),
\]
and so coincide.  It follows that the associativity coherence pentagon
in $\aD_{R}$ commutes.

For the coherence of the unit, we want to show that the triangle
diagram in $\aD_{R}$ 
\[
\xymatrix@C-2pc{%
M\sma_{R}(R\sma_{R}N)\ar[rr]^{\alpha}\ar[dr]_{\id\sma \lambda}
&&(M\sma_{R}R)\sma_{R}N\ar[dl]^{\rho \sma \id}\\
&M\sma_{R}N
}
\]
commutes.  Letting $\ze$ denote the unique element of $\LC{1}(0)$, the paths
\[
\mu \circ_{2} \eta^{\ell},\qquad
\mu \circ_{1} \eta^{r},\qquad
\alpha \circ (\ie,\ze,\ie)
\]
specify a map from the boundary of the triangle to $\LC{1}(2)$ that
fills in by contractibility.  An argument like the previous one then
shows that the unit triangle in $\aD_{R}$ commutes.

\subsection{The Braid Isomorphism}
\label{secbraid}

We now assume that $R$ is a $\LC{3}$-algebra.  Since $\LC{2}(2)$ is
connected, we can choose a path $\sigma$ from 
\[
\mu=([0,1/2]\times [0,1], [1/2,1]\times [0,1])
\]
to
\[
\mu\tau = ([1/2,1]\times [0,1], [0,1/2]\times [0,1]).
\]
(Such a path is illustrated in Section~\ref{secoutline}.)  We then get
a braid isomorphism in $\aD_{R}$ from the zigzag
\[
\opn{\mu}(M,N)\to \opn{\sigma}(M,N) \from \opn{\mu\tau}(M,N)
\]
and the isomorphism $\opn{\mu\tau}(M,N)\iso \opn{\mu}(N,M)$ induced
(upon passing to coequalizers) by the isomorphism 
\[
UR\sma_{S} M \sma_{S}N \to UR\sma_{S} N \sma_{S} M.
\]
We need to show that hexagon diagram in $\aD_{R}$
\[
\xymatrix@C-2pc{%
&(M\sma_{R}L)\sma_{R}N\ar[rr]^{\alpha}
&\quad&M\sma_{R}(L\sma_{R}N)\ar[dr]^{\id\sma \sigma }\\
(L\sma_{R}M)\sma_{R}N\ar[ur]^{\sigma\sma \id}\ar[dr]_{\alpha}
&&&&M\sma_{R}(N\sma_{R}L)\\
&L\sma_{R}(M\sma_{R}N)\ar[rr]_{\sigma}
&&(M\sma_{R}N)\sma_{R}L\ar[ur]_{\alpha}
}
\]
and the analogous hexagon with $\sigma$ replaced with its inverse (or
equivalently, $\alpha$ replaced by its inverse) commute. The paths
\[
\mu\circ_{1}\sigma,\qquad
\alpha,\qquad
\mu\circ_{2}\sigma,\qquad
\alpha, \qquad
\sigma\circ_{1}\mu,\qquad
\alpha
\]
join together to define a map from the boundary of a hexagon into
$\LC{2}(3)$.  The fundamental group of $\LC{2}(3)$ is the braid group
$B_{3}$ on 3 strands, and the braid relation on $\pi_{1}(\LC{2}(3))$
implies that 
this can be filled in to a map from the hexagon.  The remainder of the
proof follows just as in the arguments in the previous subsection, and
the other case is similar.

\subsection{The Symmetry Isomorphism}
\label{secsym}

We now assume that $R$ is a $\LC{4}$-algebra.  We want to show that
$\sigma_{M,N}=\sigma^{-1}_{N,M}$ in $\aD_{R}$, that is, that the
composite map in $\aD_{R}$
\[
\xymatrix@C-1pc{%
M\sma_{R}N\ar[r]^{\sigma}&N\sma_{R}M\ar[r]^{\sigma}&M\sma_{R}N
}
\]
is the identity.  This follows from the fact that the loop obtained
from the paths
\[
\sigma, \qquad \sigma \tau
\]
in $\LC{2}(2)$ is contractible in the simply connected space
$\LC{3}(2)$.  (This loop is not 
contractible in $\LC{2}(2)$, but generates
$\pi_{1}(\LC{2}(2))=B_{2}\iso \mathbb{Z}$.)

\section{The Lax Monoidal Smash Product}
\label{secrefine}

Previous sections have concentrated on the monoidal structure on the
derived category of left $R$-modules for a $\LC{2}$-algebra $R$.  In
this section, we study the structure that arises on the point-set
category of $R$-modules.  Although we do not get a true monoidal
structure, we do get some kind of weaker structure.  The purpose of this
section is to describe this structure and to outline
an approach to constructing it.

We organize the discussion in terms of the \term{lax monoidal
structures} of \cite[3.1.1]{leinsterbook}.  Such a
structure on a category $\aM$ consists of functors
\[
\botimes_{n}\colon \aM^{n}\to \aM
\]
for $n$ a natural number (including zero) and natural transformations
$\gamma,\iota$ 
satisfying certain coherence conditions.  These coherence conditions
are most concisely specified in terms of trees and edge contractions:
An arbitrary composite of the functors $\botimes$ can be viewed as a
planar tree with leaves labelled either by an object of $\aM$ or by
$\botimes_{0}$.  The natural transformations $\gamma$ compose
\begin{multline*}
\botimes_{m}(M_{1},\ldots,M_{i-1},
\botimes_{n}(N_{1},\ldots,N_{n}),M_{i+1},\ldots,M_{m})\\
\to \botimes_{m+n-1}(M_{1},\ldots,M_{i+1},
N_{1},\ldots,N_{n},M_{i+1},\ldots,M_{m}),
\end{multline*}
and so contract an
internal edge or edge ending in a leaf labelled $\botimes_{0}$; the natural
transformation $\iota$ is a map 
\[
M\to \botimes_{1}M
\]
and so is essentially an edge insertion, converting a node into two
nodes with an edge connecting them.  The coherence conditions are that
(1) all sequences of edge contractions that take a given
planar labelled tree to another given planar labelled tree performs
the same natural transformation, and (2) an edge insertion followed by
an edge contraction of the inserted edge is the identity.  See
\cite[3.1.1]{leinsterbook} for a formulation not involving tree
operations. 

A fundamental property of the theory of lax monoidal categories is
Theorem~3.1.6 of 
\cite{leinsterbook}: A lax monoidal category in which the natural
transformations $\gamma$ and $\iota$ are isomorphisms is equivalent to
a (strict) monoidal category.  From the perspective of homotopy
theory, we can view a lax monoidal category where the natural
transformations $\gamma$ and $\iota$ are weak equivalences (say, after
restricting to homotopy cofibrant objects) as an 
up-to-coherent-homotopy version of a monoidal category.

In the context of an $E_{2}$ algebra $R$, to give the idea of how to
construct the lax monoidal 
structure on $\aM_{R}$, we begin with a non-unital version, omitting
$\botimes_{0}$.  For this version of the construction, we can also take
$\botimes_{1}$ and $\iota$ to be the identity functor and map.  Then
we only need to treat $\botimes_{n}$ for $n\geq 2$.  The composite
operations are then the planar trees labelled by objects of $\aM_{R}$
where all internal nodes have valence $2$ or more.  Choosing a map of
operads $\phi$ from the Stasheff operad $\SK$ to $\AC$, we construct
$\botimes_{n}$ inductively as follows. Writing $\phi_{n}$ for the map
$\SK(n)\to \AC(n)$, $\phi_{2}$ is the inclusion of a point in
$\AC(2)$, and we let
$\botimes_{2}(M_{1},M_{2})=\opn{\phi_{2}}(M_{1},M_{2})$.  We have that
$\SK(2)$ is an interval and $\phi_{3}$ is a path between
$\phi_{2}\circ_{1}\phi_{2}$ and $\phi_{2}\circ_{2}\phi_{2}$; we define
$\botimes_{3}$ as the colimit of the diagram
\[
\xymatrix@-1pc{%
\opn{\phi_{2}\circ_{1}\phi_{2}}(M_{1},M_{2},M_{3})\ar[d]\ar[r]
&\opn{\phi_{3}}(M_{1},M_{2},M_{3})
&\opn{\phi_{2}\circ_{2}\phi_{2}}(M_{1},M_{2},M_{3})\ar[d]\ar[l]\\
\botimes_{2}(\botimes_{2}(M_{1},M_{2}),M_{3})
&&\botimes_{2}(M_{1},\botimes_{2}(M_{2},M_{3})),
}
\]
which we can identify as the pushout of the maps 
\begin{gather*}
\opn{\phi_{2}\circ_{1}\phi_{2}}(M_{1},M_{2},M_{3})\to
\opn{\phi_{2}}(\opn{\phi_{2}}(M_{1},M_{2}),M_{3}),\\
\opn{\phi_{2}\circ_{2}\phi_{2}}(M_{1},M_{2},M_{3})
\to \opn{\phi_{2}}(M_{1},\opn{\phi_{2}}(M_{2},M_{3}))
\end{gather*}
over the Hurewicz cofibration
\[
\opn{\phi_{2}\circ_{1}\phi_{2}}(M_{1},M_{2},M_{3})
\vee
\opn{\phi_{2}\circ_{2}\phi_{2}}(M_{1},M_{2},M_{3})
\to
\opn{\phi_{3}}(M_{1},M_{2},M_{3}).
\]
In general, the polytope $\SK(n)$ has a sub-face for each planar tree
with all internal nodes of valence $2$ or more.  Thus, the
boundary consists of all formal compositions (of total valence $n$) of
all lower valence polytopes.  We can glue together the corresponding
compositions of the operations $\botimes_{m}$ to form an operation
``$\botimes_{\partial \SK(n)}$''.  Writing $\partial \phi_{n}$ for the
restriction of $\phi_{n}$ to the boundary of $\SK(n)$, we have a
natural transformation
\[
\opn{\partial\phi_{n}}(M_{1},\ldots,M_{n})\to 
\botimes_{\partial \SK(n)}(M_{1},\ldots,M_{n}),
\]
since we can identify $\opn{\partial \phi}$ as the colimit obtained by
gluing 
the corresponding operations $\opn{f}$ obtained by composing in the
operad. Moreover, 
both colimits are formed by corresponding
iterated pushouts along Hurewicz cofibrations; by induction, when the
modules $M_{1},\ldots, M_{n}$ are homotopy cofibrant, the comparison
map on each formal composition in the boundary is a weak equivalence,
and so the natural transformation above on their colimits is a weak
equivalence. We define
$\botimes_{n}(M_{1},\ldots,M_{n})$ by the pushout diagram
\[
\xymatrix{%
\opn{\partial\phi_{n}}(M_{1},\ldots,M_{n})\ar[r]\ar[d]&
\opn{\phi_{n}}(M_{1},\ldots,M_{n})\ar[d]\\
\botimes_{\partial \SK(n)}(M_{1},\ldots,M_{n})\ar[r]
&\botimes_{n}(M_{1},\ldots,M_{n}).
}
\]
The compositions $\gamma$ are then Hurewicz cofibrations, and are weak
equivalences when the modules $M_{1},\ldots, M_{n}$ are homotopy
cofibrant.

The previous construction used the interpretation of the cells of the
Stasheff operad in terms of trees, or equivalently, the fact that the
Stasheff operad is the cofibrant operad on one cell in each valence
(or ``arity'') $n\geq 2$.  To put the unit in, we need to use a cofibrant
$A_{\infty}$ operad operad $\SK^{u}$ having $\SK^{u}(0)$ contractible
instead of empty.  Using generating cells in valence zero reflecting the
structure of the unit maps in Section~\ref{secconsunit}, the
construction above then generalizes to produce a lax monoidal structure
with $\SK^{u}(n)$ (rather than $\SK(n)$) parametrizing the
construction of $\botimes_{n}$.  We omit the remaining details.

At the cost of weakening the point-set structure further, we get a
structure even closer to the structure of $E_{n}$ interchange
operations from Section~\ref{secpair}.  We introduce the following
``partial'' version of a lax monoidal category.  Again, this is
easiest to explain in terms of planar trees.  We write $\aT_{n}$ for
the partially ordered set of planar trees with $n$ distinguished
leaves (terminal nodes), where we have a map $T\to T'$ in $\aT_{n}$
when $T'$ can be 
obtained from $T$ by contracting internal edges (edges that end in
an internal node) and/or edges ending in undistinguished leaves.  We
understand $\aT_{0}$ as the 
category with a single object (the empty tree) and morphism (the
identity). We have functors
\[
\circ_{i}\colon \aT_{m}\times \aT_{n}\to \aT_{m+n-1}
\]
that send $(T,T')$ to the tree that grafts $T'$ onto $T$ replacing
the $i$-th distinguished leaf (counting from left to right) if $T'$ is
not the empty tree, or makes the $i$-th distinguished leaf undistinguished if $T'$ is
the empty tree.

\begin{defn}
A \term{partial lax monoidal structure} on a category $\aM$ consists
of functors
\[
\botimes_{(-)}\colon \aT_{n}\times \aM^{n}\to \aM,
\]
natural transformations
\[
\eta \colon \botimes_{T\circ_{i}T'}\to \botimes_{T}\circ_{i}\botimes_{T'}
\]
and a natural transformation
\[
\iota \colon \Id \to \botimes_{S_{1}}
\]
(where $S_{1}$ is the star with one leaf), such that the transitivity
diagrams 
\[
\xymatrix{%
\botimes_{(T_{1}\circ_{i}T_{2})\circ_{j}T_{3}}\ar[r]^{=}\ar[d]_{\eta}
&\botimes_{T_{1}\circ_{i'}(T_{2}\circ_{j'}T_{3})}\ar[d]^{\eta}\\
\botimes_{T_{1}\circ_{i}T_{2}}\circ_{j}\botimes_{T_{3}}\ar[d]_{\eta}
&\botimes_{T_{1}}\circ_{i'}\botimes_{T_{2}\circ_{j'}T_{3}}\ar[d]^{\eta}\\
(\botimes_{T_{1}}\circ_{i}\botimes_{T_{2}})\circ_{j}\botimes_{T_{3}}\ar[r]_{=}
&\botimes_{T_{1}}\circ_{i'}(\botimes_{T_{2}}\circ_{j'}\botimes_{T_{3}})
}
\]
commute for all $i$, $j$ (and appropriate $i'$,$j'$), and the unit
diagrams
\[
\xymatrix{%
\botimes_{S_{1}\circ T}\ar[d]\ar[dr]^{\eta}
&&
\botimes_{T\circ_{i}S_{1}}\ar[d]\ar[dr]^{\eta}\\
\botimes_{T}\ar[r]_(.4){\iota}&\botimes_{S_{1}}\circ \botimes_{T}
&
\botimes_{T}\ar[r]_(.4){\botimes_{T}\iota}
&\botimes_{T}\circ_{i} \botimes_{S_{1}}
}
\]
commute for all $i$.
\end{defn}

A partial lax monoidal category for which the natural transformations
$\eta$ are isomorphisms is equivalent to a lax monoidal category: We
take $\botimes_{n}$ to be $\botimes_{S_{n}}$ for the stars
$S_{n}$. The natural transformations $\gamma$ are the composites
\[
\botimes_{m}\circ_{i}\botimes_{n}\xrightarrow{\eta^{-1}}
\botimes_{S_{m}\circ_{i}S_{n}}\to \botimes_{m+n-1},
\]
where the unlabelled arrow is the map induced by the edge contraction
$S_{m}\circ_{i}S_{n}\to S_{m+n-1}$.  A partial lax monoidal category
for which all the structure maps (the maps $\eta$, $\iota$, and the
maps induced by maps in $\aT_{n}$) are weak equivalences is then
another kind of up-to-coherent-homotopy version of a monoidal category.

In our context, we have the following result on the partial lax
monoidal structure on the category of left modules over an $E_{2}$ algebra $R$.
The unit of this structure will be the left module $UR$, which is not
cofibrant, but does have the property that $UR\sma_{S}S_{S}$ is
cofibrant.  In the following theorem, we say that an $R$-module is 
\term{nearly homotopy cofibrant} if $(-)\sma_{S}S_{S}$ makes it into a
homotopy cofibrant $R$-module.

\begin{thm}\label{thmplmcat}
For a $\LC{2}$-algebra $R$, $\aM_{R}$ is a partial lax monoidal
category.  This structure restricts to a partial lax monoidal
structure on the full subcategory of nearly homotopy cofibrant
$R$-modules; moreover, on this subcategory, the structure maps are
weak equivalences. 
\end{thm}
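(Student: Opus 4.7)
The plan is to construct the structure tree by tree using the interchange operations $\opn{f}$ from Section~\ref{secpair} (applied with $n=2$, so $f$ maps into $\LC{1}\supset \AC$). To each planar tree $T\in\aT_{n}$ I would assign an element $f_{T}\in \AC(n)$ obtained by reading $T$ as an iterated operadic composition: a binary internal node contributes $\mu$, a unary internal node contributes $\ie$, a nullary internal node contributes $\ze\in\AC(0)$, and for nodes of valence $k\geq 3$ we pick once and for all a chosen element of $\AC(k)$; each undistinguished leaf is absorbed by $\ze$ at its parent slot. Then set $\botimes_{T}=\opn{f_{T}}$. An edge contraction $T\to T'$ either gives $f_{T}=f_{T'}$ under the associativity of $\AC$, or (when two chosen higher-valence elements are to be compared) gives a canonical path between them in the contractible component of $\AC(n)$; applying $\opn{(-)}$ to the resulting map of parameter spaces produces the natural transformation $\botimes_{T}\to\botimes_{T'}$, and functoriality of this assignment on $\aT_{n}$ follows from operadic associativity.

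The structural natural transformations then fall out of the operadic setup: $\eta\colon \botimes_{T\circ_{i}T'}\to \botimes_{T}\circ_{i}\botimes_{T'}$ is precisely the map~\eqref{eqopmap} applied to the decomposition $f_{T\circ_{i}T'}=f_{T}\circ_{i}f_{T'}$, and $\iota\colon \Id\to \botimes_{S_{1}}$ is the obvious map $M\iso S\sma_{S}M\to UR\sma_{S}M\iso \opn{\ie}(M)$ induced by the unit $S\to UR$. The transitivity coherence square reduces to the associativity identity $(\cubea\circ_{j}\cubeb)\circ_{k}\cubec=\cubea\circ_{j}(\cubeb\circ_{k'}\cubec)$ in $\AC$ together with the naturality of the $S$-algebra maps $U(R^{(m)})\to UR\sma_{S}U(R^{(m-1)})$ that underlie~\eqref{eqopmap} (as already used in the proof of Theorem~\ref{thmopcomp}). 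Similarly, the unit coherence diagrams reduce to $\ie\circ_{1}\cubea=\cubea=\cubea\circ_{m+1}\ie$.

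For the final clause, Theorem~\ref{thmopcomp} shows that each $\eta$ is a weak equivalence on cofibrant inputs, and Theorem~\ref{thmhocof} shows that $\opn{f_{T}}$ carries homotopy cofibrant inputs to homotopy cofibrant outputs when its parameter space is homotopy equivalent to a CW complex (which holds for each $f_{T}$). The edge-contraction maps $\botimes_{T}\to \botimes_{T'}$ are weak equivalences on homotopy cofibrant inputs because they come from contracting a contractible factor in the parameter space and applying the enriched-functorial $\opn{(-)}$. The main obstacle is the unit: $UR$ is not itself cofibrant, so the $\iota$-map and the structure maps at nullary nodes force us to work with modules for which $(-)\sma_{S}S_{S}$ is a cofibrant approximation, i.e., nearly homotopy cofibrant modules. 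The delicate verification is that $\botimes_{T}$ preserves near-cofibrancy: concretely, one must check that inserting $UR$ (or $UR\sma_{S}S_{S}$) into any slot of $\opn{f_{T}}$ and then smashing the output with $S_{S}$ recovers a cofibrant $R$-module, which in turn reduces via Theorem~\ref{thmhocof} to the fact that $UR\sma_{S}S_{S}$ is cofibrant and that the operations $\opn{f}$ commute with $(-)\sma_{S}S_{S}$ up to natural isomorphism.
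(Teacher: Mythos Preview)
Your approach has a genuine gap in the functoriality on $\aT_{n}$.  Assigning to each tree $T$ a single \emph{element} $f_{T}\in\AC(n)$ cannot yield a strict functor $\aT_{n}\times\aM_{R}^{n}\to\aM_{R}$: an edge contraction merging internal nodes of valences $a,b\geq 2$ replaces the composite $(\text{chosen element of }\AC(a))\circ_{i}(\text{chosen element of }\AC(b))$ by the chosen element of $\AC(a+b-1)$, and these are distinct points of $\AC(n)$.  Your proposed remedy---a path in $\AC(n)$---does not produce a natural transformation $\opn{f_{T}}\to\opn{f_{T'}}$ between functors with one-point parameter spaces; at best it produces a zigzag $\opn{f_{T}}\to\opn{\text{path}}\from\opn{f_{T'}}$, and composition of edge contractions would then require the concatenation of chosen paths to equal the chosen path, which cannot be arranged strictly.

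The paper sidesteps this by letting the parameter \emph{space} vary with the tree: one sets
\[
\AC(T)=\AC(n_{1})\times\dotsb\times\AC(n_{r}),
\]
where $n_{1},\dotsc,n_{r}$ are the valences of the internal nodes of $T$, and defines $\botimes_{T}=\opn{\AC(T)}$.  An edge contraction $T\to T'$ is then realized by the operadic composition map $\AC(T)\to\AC(T')$ (a map of spaces over $\AC(n)$), which induces a genuine natural transformation $\opn{\AC(T)}\to\opn{\AC(T')}$; functoriality on $\aT_{n}$ follows at once from associativity of operadic composition.  The map $\iota$ is the inclusion $\Id=\opn{\{\ie\}}\to\opn{\AC(1)}=\botimes_{S_{1}}$ of the identity point into the full space $\AC(1)$.  (Incidentally, your formula $\opn{\ie}(M)\iso UR\sma_{S}M$ is not right: $\opn{\ie}(M)=UR\sma_{UR}M\iso M$.)  With this correction in place, the rest of your outline---$\eta$ from~\eqref{eqopmap}, the transitivity and unit diagrams, preservation of near-cofibrancy via commutation with $(-)\sma_{S}S_{S}$ and Theorem~\ref{thmhocof}, and the weak-equivalence assertion via Theorem~\ref{thmopcomp}---matches the paper's argument.
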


Given a tree $T$, write 
\[
\AC(T)=\AC(n_{1})\times \dotsb \times \AC(n_{r})
\]
where $n_{1},\dotsc,n_{r}$ are the valences of the internal nodes.
This, together with the operadic multiplication, makes $\AC$ into
functors on the categories $\aT_{n}$, 
as in~\cite[\S1]{gk}. We define 
\[
\botimes_{T}=\opn{\AC(T)},
\]
we take the maps $\eta$ to be the natural transformations as
constructed in~\eqref{eqopmap}, and we take $\iota$ to be the
inclusion $\Id=\opn{\{\ie\}}\to \opn{\AC(1)}=\botimes_{S_{1}}$.  The
commutativity of the diagrams is an easy check of the definitions.
As the constructions preserve homotopy equivalences and commute
with $(-)\sma_{S}S_{S}$, the structure restricts to a structure on
the full subcategory of nearly homotopy cofibrant modules by
Theorem~\ref{thmhocof}.  The weak equivalence assertion follows from
Theorem~\ref{thmopcomp} and its proof. 

The construction above is sufficient for Theorem~\ref{thmplmcat}, but
is only guaranteed to give the correct homotopy types on the nearly
homotopy cofibrant $R$-modules.  One could imagine constructing 
functors that are correct on a more general class of $R$-modules by
using a two-sided bar construction or homotopy universal left Kan
extension (HULK). We have not pursued this HULK smash product: in all
but the simplest cases, the HULK is very difficult to control.

\subsection*{Generalizations}

For an $E_{n}$ algebra $R$, $n>2$, one expects a point-set structure
on the category of $R$-modules reflecting the additional structure on $R$.
Using ideas of \cite{dunntensor} and \cite{iteratedmonoidal}, one 
possibility would be some kind of lax (or partial lax) iterated
monoidal category structure.  Alternatively, one can view a lax
monoidal structure on a category $\aC$ as a pseudo-functorial
map of operads of categories
\[
\Sigma \to \End(\aC),
\]
where $\Sigma$ denotes the associative algebra operad
$\Sigma(n)=\Sigma_{n}$ (viewed as a discrete category), and
$\End(\aC)$ denotes the endomorphism operad 
\[
\End(\aC)(n)=\Fun(\aC^{n},\aC)
\]
of functors $\aC^{n}\to \aC$ and natural transformations. Recent
development of the theory
of quasi-categories give an interpretation of $\End(\aC)$
as an operad in $(\infty,1)$-categories (using a simplicial
localization, singular complex, 
or homotopy coherent nerve construction); another formulation of the
expected structure would be a map 
\[
\LC{n-1}\to \End(\aM_{R})
\]
in an appropriate homotopy category of operads of
$(\infty,1)$-categories.  We intend these remarks as suggestive rather
than rigorous and offer no further details. 

\subsection*{Converse Conjectures}

In the context of stable homotopy categories, under suitable technical
hypotheses, the thick subcategory generated by a particular object $X$
is equivalent to the thick subcategory of small objects in the derived
category of the endomorphism ring spectrum $\End(X)$ of $X$.  The
converse conjecture in the introduction is based on the familiar
principle that structure 
on the derived category should reflect and be reflected by structure
on the ring.  

In the case of an $E_{2}$ ring spectrum $R$, we
have seen above that the derived category obtains a monoidal structure and
the point-set category obtains a weakened version of a monoidal
structure with the unit weakly equivalent to $R$.
Starting from the other side, given a category $\aC$ with an
appropriate notion of weak equivalence and an appropriate weakened
monoidal structure with unit $U$, consider the derived endomorphism
ring spectrum $\End(U)$.  Under suitable technical conditions, we can
construct $\Hom$ spectra of the appropriate homotopy type and
$\End(U)=\Hom(U,U)$ has an $S$-algebra structure under
composition (or partial or $A_{\infty}$ $S$-algebra structure
depending on how strictly the $\Hom$ spectra compose).  The weakened monoidal
structure gives us zigzags of weak equivalences
between $U'\otimes \dotsb \otimes U'$ and $U$ (where $U'$ may be 
a cofibrant approximation or similar homotopical replacement), and
this should (conjecturally) 
induce a second (partial and/or $A_{\infty}$) structure on $\End(U)$ that
satisfies an appropriate 
homotopy interchange property with respect to composition.  Together,
these can then be rectified to an $E_{2}$ ring spectrum structure.

In the special case of the monoidal category of bimodules over an
$S$-algebra, McClure and Smith \cite{McClureSmith} produced such an
$E_{2}$ structure, affirming the Deligne Hochschild cohomology
conjecture.  Since 2004, the author has been advertising the following
problem, generalizing the Deligne conjecture and providing a converse.

\begin{prob}
Formulate a point-set homotopy coherent $E_{n-1}$-monoidal structure
that arises on the category of modules over an $E_{n}$ ring
spectrum. Prove that for a category $\aC$ with such a structure, under
appropriate technical hypotheses, the derived endomorphism ring
spectrum $E$ of the unit is an $E_{n}$-algebra such that the induced
structure on the category of $E$-modules is compatible with the
original structure on $\aC$.
\end{prob}

This problem has since been solved by Clark Barwick \cite{HCA} and
David Gepner \cite{Gepner}; another statement of a version of the
result can be found in Lurie's DAG-VI~\cite[2.3.15]{DAGVI}. 

\section{The Moore Algebra}
\label{secmoore}

We close this paper with a brief note about the relationship between
the left module enveloping algebra of an $\AC$-algebra and the Moore
algebra.  While we can make sense of the left module enveloping
algebra for an algebra over an arbitrary non-$\Sigma$ operad, the
Moore algebra construction is specific to algebras over $\AC$: An
$\AC$-algebra $R$ has the same relationship to its Moore algebra $\MA$
as the based loop space of a topological space has to its Moore loop
space.

We begin with the construction of the Moore algebra.  For this, we let
$P=(0,\infty)\subset \bR$ and $\bar 
P=[0,\infty)\subset \bR$ denote the positive real numbers and
non-negative real numbers, respectively.  Then as an $S$-module, $\MA$
is defined by the following pushout diagram.
\[
\xymatrix@-1pc{%
P_{+}\sma S\ar[r]\ar[d]&P_{+}\sma R\ar[d]\\
\bar P_{+}\sma S\ar[r]&\MA
}
\]
The multiplication on $\MA$ follows the same idea as the multiplication
on the Moore loop space. We think of $r\in P$ as specifying a length,
and we use the action of $\AC(2)$ on $R$ for ``concatenation'':  Given
$r\in P$ and $s\in P$, rescaling the length $r+s$ interval
\[
\underbrace{%
\subseg{r}{10em}%
\subseg{s}{6em}}_{r+s}
\]
specifies an element of $\AC(2)$, with first box length $r/(r+s)$ and
second box length $s/(r+s)$.  We then get a map $P\times P\to
P\times \AC(2)$, sending $(r,s)$ to the length $r+s\in P$ and the
sub-intervals $([0,r/(r+s)],[r/(r+s),1])\in \AC(2)$.  Using the
$\AC$-algebra structure on $R$, we get the \term{concatenation map}
\[
(P_{+}\sma R)\sma_{S} (P_{+}\sma R)\iso (P\times P)_{+}\sma R\sma_{S} R
\to P_{+}\sma \AC(2)_{+}\sma R\sma_{S} R\to P_{+}\sma R.
\]
Since the map $S\to R$ is induced by $\ze\in \AC(0)$, the concatenation
map extends to
a map $\MA\sma \MA\to \MA$.  An easy computation shows that this provides
an associative multiplication on $\MA$, which has unit $S\to \MA$ induced
by the inclusion of $0$ in $\bar P$,
\[
S\iso \{0\}_{+}\sma S\to \bar P_{+}\sma S\to \MA.
\]
We make the following definition.

\begin{defn}\label{defmoore}
The \term{Moore algebra} of $R$ is the associative $S$-algebra
\[
\MA=(\bar P_{+}\sma S)\cup_{(P_{+}\sma S)}(P_{+}\sma R)
\]
with
multiplication induced by the concatenation map as above.
\end{defn}

Dropping the lengths, we obtain a natural map of $S$-modules $\chi
\colon \MA\to R$.  In general the map is not a map of $\AC$-algebras,
but it is a map of associative $S$-algebras if the $\AC$-algebra
structure on $R$ comes from an associative $S$-algebra structure.  We
also have the following analogue of Theorem~\ref{thmACnice}.

\begin{prop}\label{propMAhe}
The map $\chi \colon \MA\to R$ is a homotopy equivalence of $S$-modules.
\end{prop}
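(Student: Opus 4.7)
The plan is to mimic the proof of Theorem~\ref{thmACnice}: construct an $S$-module splitting $\psi\colon R\to \MA$ of $\chi$, and then exhibit a deformation of $\MA$ onto its image by linearly interpolating the length coordinate toward a fixed positive value.

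Take $\psi$ to be the composite
\[
R\iso \{1\}_{+}\sma R\to P_{+}\sma R\to \MA,
\]
i.e.\ the inclusion at length $1$. Since $\chi$ forgets the length coordinate on $P_{+}\sma R$, the composite $\chi\circ \psi$ is the identity on $R$.

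For the reverse composite, I would define a homotopy $H_{t}\colon \MA\to \MA$ by
\[
H_{t}(r,x)=((1-t)r+t,\,x)\quad\text{on }P_{+}\sma R,
\qquad
H_{t}(r,s)=((1-t)r+t,\,s)\quad\text{on }\bar P_{+}\sma S.
\]
These two formulas agree on the common subspace $P_{+}\sma S$ and so assemble to a map of $S$-modules out of the pushout defining $\MA$. For $t\in[0,1]$ and $r\in\bar P$, the length $(1-t)r+t$ lies in $\bar P$ (and strictly in $P$ unless both $r=0$ and $t=0$), so $H_{t}$ indeed lands in $\MA$. At $t=0$ we recover the identity, and at $t=1$ we obtain the map $(r,x)\mapsto(1,x)$ and $(r,s)\mapsto(1,s)$; on the $\bar P_{+}\sma S$ summand this equals $\psi\circ \chi$ using that $\chi|_{\bar P_{+}\sma S}$ factors as $\bar P_{+}\sma S\to S\to R$ followed by inclusion of the unit, which is precisely the image of $\psi$ at length $1$.

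The hard part---such as it is---is just verifying that the linear interpolation respects the pushout defining $\MA$ and that the length stays non-negative, both of which are immediate from the formula. There is no real homotopical obstacle beyond the contractibility of $\bar P$ to the point $\{1\}$, and the whole argument runs in parallel with the closing paragraph of Section~\ref{seclmea}.
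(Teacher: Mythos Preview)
Your proof is correct and is exactly the argument the paper intends: the paper states the proposition as ``the following analogue of Theorem~\ref{thmACnice}'' and gives no separate proof, leaving the reader to supply precisely the linear contraction of $\bar P$ onto $\{1\}$ that you wrote out. In fact your version is slightly cleaner than the $UR$ case, since here $\chi\circ\psi$ is the identity on the nose (no $\AC(1)$-action is needed), so the second homotopy $G_{t}$ from the end of Section~\ref{seclmea} has no analogue.
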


To compare the Moore algebra $\MA$ with the left module enveloping
algebra, we construct an algebra $C=CR$ in between.  Let $E=P\times
P\times \bar P$, $\bar E=\bar P\times P\times \bar P$, and define $C$ by
the following pushout diagram of $S$-modules.
\[
\xymatrix@-1pc{%
E_{+}\sma S\ar[r]\ar[d]&E_{+}\sma R\ar[d]\\
\bar E_{+}\sma S\ar[r]&C
}
\]
The multiplication on $C$ combines the multiplications on $\MA$ and
$UR$.  We think of an element $(\ell_{1},\ell_{2},\ell_{3})$ of $E$ as
specifying an interval of length $\ell_{1}+\ell_{2}+\ell_{3}$ together
with sub-intervals of length $\ell_{1}$, $\ell_{2}$, and $\ell_{3}$ in
that order.
\[
\subseg{\ell_{1}}{9em}%
\subseg{\ell_{2}}{2em}%
\subseg{\ell_{3}}{5em}
\]
Given an element $(m_{1},m_{2},m_{3})$ of $E$, the composition
$\circ_{2}$ in $\AC(2)$ has an analogue that associates to
$(\ell_{1},\ell_{2},\ell_{3})$ and $(m_{1},m_{2},m_{3})$ an interval of length
$\ell_{1}+\ell_{2}(m_{1}+m_{2}+m_{3})+\ell_{3}$ with four
sub-intervals of lengths $\ell_{1}$, $\ell_{2}m_{1}$, $\ell_{2}m_{2}$,
and $\ell_{2}m_{3}+\ell_{3}$, in that order.
\[
\subseg{\ell_{1}}{9em}%
\subseg{\ell_{2}m_{1}}{4em}%
\subseg{\ell_{2}m_{2}}{3em}%
\subsegnoend{\ell_{2}m_{3}}{2em}%
\subsegnobeg{\ell_{3}}{5em}
\]
We define a map $E\times E\to E\times \AC(2)$ using the map $E\times
E\to E$ that concatenates the first two sub-intervals, sending
$(\ell_{1},\ell_{2},\ell_{3})$ and $(m_{1},m_{2},m_{3})$ to
\[
(\ell_{1}+\ell_{2}m_{1},\ell_{2}m_{2}, \ell_{2}m_{3}+\ell_{3}),
\]
and using the map $E\times E\to \AC(2)$ that rescales the union of the
first two sub-intervals to length $1$,
\[
\underbrace{%
\subseg{\ell_{1}}{9em}%
\subseg{\ell_{2}m_{1}}{4em}%
}_{\ell_{1}+\ell_{2}m_{1}}
\]
sending $(\ell_{1},\ell_{2},\ell_{3})$ and $(m_{1},m_{2},m_{3})$ to
$([0,\ell_{1}/(\ell_{1}+\ell_{2}m_{1})],[\ell_{1}/(\ell_{1}+\ell_{2}m_{1}),1])$.
The map
\[
(E_{+}\sma R)\sma_{S} (E_{+}\sma R)\iso (E\times E)_{+}\sma R\sma_{S}R
\to E_{+}\sma \AC(2)_{+}\sma R\sma_{S} R\to E_{+}\sma R
\]
extends to a map $C\sma_{S}C\to C$ that provides the multiplication in an
associative $S$-algebra structure.  The unit is induced by the
inclusion of $(0,1,0)$ in $E$,
\[
S\iso \{(0,1,0)\}_{+}\sma S\to \bar E_{+}\sma S\to C.
\]

We can now use $C$ to compare $\MA$ and $UR$ in the category
of associative $S$-algebras.
The embedding $P\to E$ sending $r$ to $(r,1,0)$ and the embedding
$D\to E$ sending $([0,a],[a,b])$ to $(a,b-a,1-b)$ make the following
diagram commute
\[
\xymatrix@R-1pc{%
P\times P\ar[r]\ar[d]&E\times E\ar[d]&D\times D\ar[l]\ar[d]\\
P\times \AC(2)\ar[r]&E\times \AC(2)&D\times \AC(2)\ar[l]
}
\]
and induce maps of associative $S$-algebras
\[
\MA\to CR \from UR.
\]
Looking at Theorem~\ref{thmACnice} and Proposition~\ref{propMAhe} (and
the inverse homotopy equivalences), we
see that these maps are homotopy equivalences of the underlying
$S$-modules.  Thus, we have proved the following theorem.

\begin{thm}\label{thmABC}
The maps of $S$-algebras $\MA\to CR$ and $UR\to CR$ are weak
equivalences and homotopy equivalences of the
underlying $S$-modules.  
\end{thm}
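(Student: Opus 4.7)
The plan is to construct explicit strong deformation retractions $r_{\MA}\colon CR\to \MA$ and $r_{UR}\colon CR\to UR$ at the level of the underlying $S$-modules, by lifting retractions of the indexing spaces $\bar E\to \bar P$ and $\bar E\to \bar D$ to the pushouts. The homotopies back to the identity on $CR$ will be elementary linear homotopies, in the spirit of those already exhibited in the proofs of Theorem~\ref{thmACnice} and Proposition~\ref{propMAhe}.

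For $\alpha$, I take $r_\MA$ to be induced by the first-coordinate projection $\bar E = \bar P\times P\times \bar P\to \bar P$, which restricts to $E\to P$. Since $\alpha$ is induced by the embedding $\ell\mapsto (\ell,1,0)$, the composite $r_\MA\circ \alpha$ is the identity of $\MA$ on the nose. A homotopy from the identity of $CR$ to $\alpha\circ r_\MA$ is induced by the self-map of $\bar E$ (restricting to a self-map of $E$) defined by
\[
H_t(\ell_1,\ell_2,\ell_3)=\bigl(\ell_1,\,(1-t)\ell_2+t,\,(1-t)\ell_3\bigr),
\]
which is well-defined because $(1-t)\ell_2+t>0$ whenever $\ell_2>0$ and $t\in[0,1]$, and which satisfies $H_0=\id$ and $H_1(\ell_1,\ell_2,\ell_3)=(\ell_1,1,0)$.

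For $\beta$, I take $r_{UR}$ to be induced by the retraction $\bar E\to \bar D=\AC(1)$ sending $(\ell_1,\ell_2,\ell_3)$ to the interval $[\ell_1/L,(\ell_1+\ell_2)/L]\in \AC(1)$, where $L=\ell_1+\ell_2+\ell_3>0$; this restricts to $E\to D$ sending $(\ell_1,\ell_2,\ell_3)$ to $([0,\ell_1/L],[\ell_1/L,(\ell_1+\ell_2)/L])$. Since $\beta$ lands in the hyperplane $L=1$, on which the normalization is trivial, the composite $r_{UR}\circ \beta$ is the identity of $UR$. The homotopy from the identity of $CR$ to $\beta\circ r_{UR}$ comes from the uniform rescaling
\[
G_t(\ell_1,\ell_2,\ell_3)=\bigl((1-t)+t/L\bigr)\,(\ell_1,\ell_2,\ell_3),
\]
which is a self-homotopy of $\bar E$ (and of $E$) because the scaling factor is always positive, interpolating from the identity at $t=0$ to $\beta\circ r_{UR}$ at $t=1$.

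The only step requiring care is verifying that these retractions and homotopies descend along the pushout diagrams defining $\MA$, $CR$, and $UR$, i.e., that they commute with the inclusions $P\to \bar P$ and $E\to \bar E$, and with the projection $D\to \bar D$ that drops the first sub-interval. This is straightforward from the formulas, since they are defined by the same expressions on $E$ and $\bar E$, involve only sums, products, and positive quotients that remain in the appropriate ranges, and the quantity $L$ is insensitive to the inclusion $E\hookrightarrow \bar E$. The resulting homotopy equivalences of underlying $S$-modules are in particular weak equivalences, completing the proof of Theorem~\ref{thmABC}.
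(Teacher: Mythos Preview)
Your argument is correct. The retractions $r_{\MA}$ and $r_{UR}$ are well-defined on the indexing spaces, the linear homotopies $H_t$ and $G_t$ stay within $E$ and $\bar E$ as you verify, and since these maps and homotopies are the identity on the $R$ and $S$ factors they descend to the pushouts defining $CR$, $\MA$, and $UR\cong AR$. One small point worth making explicit: the retractions you build are only maps of $S$-modules (they do not respect the algebra structures), but this is exactly what the theorem requires.

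Your route differs slightly in organization from the paper's. The paper's one-line argument (``Looking at Theorem~\ref{thmACnice} and Proposition~\ref{propMAhe} and the inverse homotopy equivalences\dots'') invokes the homotopy equivalences $UR\to R$ and $\MA\to R$ already established, together with the analogous projection $CR\to R$, and uses a two-out-of-three argument on the commuting triangles $\MA\to CR\to R$ and $UR\to CR\to R$. You instead construct deformation retractions of $CR$ onto the images of $\MA$ and $UR$ directly, without passing through $R$. Both approaches rest on the same underlying idea---the indexing spaces are contractible in compatible ways, via elementary linear or scaling homotopies---and yours has the virtue of being self-contained and making every map and homotopy explicit, whereas the paper's approach reuses earlier results but leaves the verification of the analogue for $CR$ to the reader.
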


Finally, we explain the relationship of $\MA$ to $R$ in the category
of $\AC$-algebras.  We can choose a zigzag of weak equivalences
\[
R\from R'\to R''
\]
where $R''$ is an associative $S$-algebra. Then in the diagram of
weak equivalences
\[
\xymatrix@-1pc{%
R_{M}&R'_{M}\ar[l]\ar[r]&R''_{M}\ar[d]^{\chi}\\
R&R'\ar[l]\ar[r]&R'',
}
\]
the right vertical arrow is a map of associative $S$-algebras.  This
diagram then gives a zigzag of weak equivalences in the category of
$\AC$-algebras between $R$ and $\MA$.


\bibliographystyle{plain}
\def\noopsort#1{}\def\MR#1{}

\end{document}